\documentclass[10pt,a4paper]{article}
\usepackage[latin1]{inputenc}
\usepackage{xcolor}
\usepackage{pgfplots}

\usepackage[english]{babel}
    
\usepackage{graphicx}
\usepackage{mathtools}

\allowdisplaybreaks

\usepackage{xcolor}
\usepackage[colorinlistoftodos,bordercolor=orange,backgroundcolor=orange!20,linecolor=orange,textsize=scriptsize]{todonotes}

\newcommand{\eqdef}{\overset{\text{def}}{=}} 
\newcommand{\EE}[1]{\mathbb{E}\left[#1\right] }

\newcommand{\norm}[1]{\lVert#1\rVert}
\newcommand{\dotprod}[1]{\left< #1\right>}

\usepackage{xcolor}
\usepackage{color}
\usepackage{graphicx}

\usepackage{amsfonts}
\usepackage{mathrsfs, amsmath}
\usepackage{amsthm}
\usepackage{amssymb} 

\usepackage{mdframed} 
\usepackage{thmtools}
\usepackage{multirow}

\definecolor{shadecolor}{gray}{0.95}
\declaretheoremstyle[
headfont=\normalfont\bfseries,
notefont=\mdseries, notebraces={(}{)},
bodyfont=\normalfont,
postheadspace=0.5em,
spaceabove=1pt,
mdframed={
  skipabove=8pt,
  skipbelow=8pt,
  hidealllines=true,
  backgroundcolor={shadecolor},
  innerleftmargin=4pt,
  innerrightmargin=4pt}
]{shaded}

\usepackage[pagebackref=true,backref=true]{hyperref}
\renewcommand*{\backrefalt}[4]{%
    \ifcase #1 \footnotesize{(Not cited.)}%
    \or        \footnotesize{(Cited on page~#2)}%
    \else      \footnotesize{(Cited on pages~#2)}%
    \fi}
    
\hypersetup{
    colorlinks,
    linkcolor={red!50!black},
    citecolor={blue!50!black},
    urlcolor={blue!80!black}
}
 
\usepackage{fancyhdr}
\usepackage{amsmath}
\usepackage{booktabs}
\usepackage{amssymb}
\usepackage{algorithm}
\usepackage{algorithmic}
\usepackage[scale=0.7]{geometry}

\newcommand \RR {\mathbb{R}}

\newcommand{\scp}[2]{\langle #1 \,,\, #2\rangle}



\DeclareMathOperator{\signum}{sign}     
\DeclareMathOperator{\argmin}{argmin}        

\DeclareMathOperator{\dist}{dist}





\numberwithin{equation}{section}

\declaretheorem[style=shaded,within=section]{definition}
\declaretheorem[style=shaded,sibling=definition]{theorem}

\declaretheorem[style=shaded,sibling=definition]{corollary}

\declaretheorem[style=shaded,sibling=definition]{lemma}

\declaretheorem[style=shaded,sibling=definition]{remark}

\usepackage{subfig}

\title{Adaptive Bregman-Kaczmarz: An Approach to Solve Linear Inverse Problems with Independent Noise Exactly}
\author{Lionel Tondji\thanks{Center for Industrial Mathematics, Fachbereich Mathematik/Informatik, University of Bremen, 28334 Bremen and Institute for Analysis and Algebra, TU Braunschweig, 38092 Braunschweig, Germany, \texttt{tondji@uni-bremen.de}. This work has received funding from the European Union's Framework Programme for Research and Innovation Horizon 2020 (2014-2020) under the Marie Sk\l odowska-Curie Grant Agreement No. 861137.},
  Idriss Tondji\thanks{African Institute for Mathematical Sciences (AIMS), AMMI Senegal, \texttt{itondji@aimsammi.org}},
  Dirk Lorenz\thanks{Center for Industrial Mathematics, Fachbereich Mathematik/Informatik, University of Bremen, 28334 Bremen and Institute for Analysis and Algebra, TU Braunschweig, 38092 Braunschweig, Germany, \texttt{d.lorenz@uni-bremen.de}}}
\date{\today}

\begin{document}

\maketitle

\begin{abstract}
  We consider the block Bregman-Kaczmarz method for finite dimensional linear inverse problems.
  The block Bregman-Kaczmarz method uses blocks of the linear system and performs iterative steps with these blocks only.
  We assume a noise model that we call \emph{independent noise}, i.e. each time the method performs a step for some block, one obtains a noisy sample of the respective part of the right-hand side which is contaminated with new noise that is independent of all previous steps of the method.
  One can view these noise models as making a fresh noisy measurement of the respective block each time it is used.
  In this framework, we are able to show that a well-chosen adaptive stepsize of the block Bergman-Kaczmarz method is able to converge to the exact solution of the linear inverse problem.
  The plain form of this adaptive stepsize relies on unknown quantities (like the Bregman distance to the solution), but we show a way how these quantities can be estimated purely from given data.
  We illustrate the finding in numerical experiments and confirm that these heuristic estimates lead to effective stepsizes.
\end{abstract}

\noindent
\textbf{Keywords:} Randomized Bregman-Kaczmarz method, adaptive stepsize, inverse problems
\medskip

\noindent
\textbf{AMS Classification:}
65F10, 
15A29, 
65F20, 

\section{Introduction}
In a finite dimensional linear inverse problem, we have a linear map $\mathbf{A}$ that represents an indirect measurement of some unknown quantity $\hat{x}$. Since the real world measurements always contain noise, one never actually sees $b = \mathbf{A}x$, but always a noisy version of this. The aim is now, to obtain an approximation to $\hat{x}$ only from the knowledge of the noisy data and the linear map $\mathbf{A}$ (see~\cite{engl1996regularization,mueller2021}).
One specific class of iterative methods are row-action or block-action methods that only use single rows or blocks of rows of the linear operator (represented as a matrix) for each iteration.
Probably the oldest such method is the Kaczmarz method~\cite{Kac37} that is also known as the algebraic reconstruction technique~\cite{gordon1970algebraic}.
In this work, we consider a generalization of the Kaczmarz method, namely the Bregman-Kaczmarz method~\cite{lorenz2014linearized,LWSM14,P15} which is a method to solve minimization problems
\begin{equation} 
\label{eq:PB}
\hat x \eqdef \operatorname*{arg min}_{x \in \RR^n} f(x) \quad \text{subject to} \quad  \mathbf{A}x=b,
\end{equation}
for a strongly convex function $f$ which is finite everywhere. Due to strong convexity, this problem has a unique solution if the system $\mathbf{A}x=b$ is consistent.

Our main result in this article is that we show that it is possible to calculate the \emph{exact} solution of~\eqref{eq:PB} even if one never sees the noise-free right-hand side $b$, but each time that one queries a row or block of the matrix, one obtains the respective entry or block of entries of the right-hand side, contaminated by independent noise.

\subsection{Problem statement}
\label{sec:prob-statment}

We give a more formal statement of the problem:
\begin{enumerate}
\item We are given a matrix $\mathbf{A}\in\RR^{m\times n}$ and a strongly convex function $f:\RR^{n}\to\RR$.
\item The matrix is partitioned into $M$ blocks of rows, i.e. 
  \begin{align*}
    \mathbf{A} =
    \begin{pmatrix}
      \mathbf{A}_{(1)}\\\vdots\\\mathbf{A}_{(M)}
    \end{pmatrix}
  \end{align*}
  with $\mathbf{A}_{(i)}\in\RR^{m_{i}\times n}$ where, without loss of generality, we assume that the blocks consist of consecutive rows which can always be achieved by reordering the rows.
\item Assume that there exists ``true data $b\in\RR^{m}$'' such that the system $\mathbf{A}x = b$ is consistent, we aim to compute the minimum-$f$-solution of $\mathbf{A}x = b$, i.e. the unique solution $\hat{x}$ of 
\begin{align*}
\argmin_{x\in\RR^{n}} f(x) \quad \text{subject to}\quad \mathbf{A}x = b.
\end{align*}
\item We are not given $b$, but each time we query the $i$-th block of $b$, i.e. $b_{(i)}\in\RR^{m_{i}}$ we get a perturbation $\tilde{b}_{(i)}\in\RR^{m_{i}}$ which is $\tilde{b}_{(i)} = b_{(i)} + \varepsilon_{(i)}$ where $\varepsilon_{(i)}$ is a random vector with zero mean and variance $\sigma_{i}^{2}$, i.e.
  \begin{align*}
    \EE{\varepsilon_{(i)}} = 0,\quad  \text{and}\quad \EE{\norm{\varepsilon_{(i)}}^{2}} = \sigma_{i}^{2}.
  \end{align*}
\end{enumerate}
The noise model in the last point is different from other noise models where it is often assumed that a single fixed noisy measurement $b^{\delta}$ is given which fulfills a deterministic error condition $\norm{b-b^{\delta}}\leq \delta$. We call our random noise model \emph{independent noise}. Practically this means that each time we consider the $i$-th block of the right-hand side, we have made a new fresh measurement of $\hat{x}$ with the $i$-th block $\mathbf{A}_{(i)}$ of the measurement matrix $\mathbf{A}$. We denote by
\begin{align*}
\sigma \eqdef (\sum_{i=1}^{M}\sigma_{i}^{2})^{1/2}
\end{align*}
the \emph{total noise level}. We also use the notation 
\begin{align*}
\norm{\mathbf{A}}_{\square} \eqdef \left( \sum\limits_{i=1}^{M}\norm{\mathbf{A}_{(i)}}_2^2 \right)^{1/2}
\end{align*}
for the blockwise mixture of spectral and Frobenius norm (indeed for $M=1$ we have $\norm{\mathbf{A}}_{\square} = \norm{\mathbf{A}}_{2}$ and for $M=m$ we have $\norm{\mathbf{A}}_{\square} = \norm{\mathbf{A}}_{F}$). 

The strongly convex function $f$ is used to resolve the problem of non-uniqueness in the case that $n>m$. It is also used to impose prior knowledge on the solution, e.g. one can promote sparsity of the solution by using $f(x) = \lambda\norm{x}_{1} + \norm{x}_{2}^{2}$ for $\lambda>0$~\cite{LWSM14}.

\subsection{The Bregman-Kaczmarz method}
\label{sec:bregman-kaczmarz}

The Bregman-Kaczmarz method for the solution of~\eqref{eq:PB} with a fixed given right-hand side $\tilde{b}$ and sampling of single rows of $\mathbf{A}$ works as follows: Given an initialization $x_{0}^{*}\in\RR^{n}$, compute $x_{0} = \nabla f^{*}(x_{0}^{*})$ (where $f^{*}$ denotes the convex conjugate of $f$, see~\cite{rockafellar1970}) and with a given sequence $\eta_{k}$ of stepsizes choose in each iteration a (random) row-index $i$ and perform the update 
\begin{equation}
\begin{aligned}
\label{eq:noisy_kaczmarz}
    x^{*}_{k+1} &= x^{*}_k - \eta_k \frac{\langle a_{i}, x_k \rangle - \Tilde{b}_{i}}{\|a_{i}\|^2_2} \cdot  a_{i}  \\
    x_{k+1} &= \nabla f^*(x^*_{k+1}),
\end{aligned}
\end{equation}
The method is a special case of the (relaxed) randomized Bregman projections~\cite{lorenz2014linearized}.
In this paper, we consider a block version where one chooses a random \emph{block} $\mathbf{A}_{(i)}$ from the given partition of $\mathbf{A}$. The choice of a block is random in each iteration and for simplicity we assume that the probability $p_{i}$ to choose the $i$-th block is proportional to the square of the spectral norm of the block matrix, i.e. $p_{i} = \norm{\mathbf{A}_{(i)}}^{2}/\norm{\mathbf{A}}_{\square}^{2}$. The resulting method is detailed in Algorithm~\ref{alg:BK}. Recall that $\tilde{b}_{(i)}$ is a noisy version of $b_{(i)}$ according to the model of independent noise as described in the problem statement in Section~\ref{sec:prob-statment}.

\begin{algorithm}[ht]
  \caption{Adaptive Bregman-Kaczmarz method (aBK)}
  \label{alg:BK}
  \begin{algorithmic}[1]
    \STATE{\textbf{Given:} matrix $\mathbf{A}\in\RR^{m\times n}$ and a ``sampler'' for blocks $b_{(i)}$ of the right hand side}
    \STATE {\textbf{Input:} Choose  $x_0^* \in \RR^n$, set $x_0 = \nabla f^*(x^*_{0})$ and choose stepsizes $\eta_{k}$.}
    \STATE{\textbf{Output:} (approximate) solution of $\min\limits_{x\in \RR^n} f(x) \quad \text{s.t.} \quad  \mathbf{A}x=b$.}
    \STATE initialize $k=0$
    \REPEAT
    \STATE choose a block index $i_k=i\in \left\{ 1,\dots, M \right\}$ with probability $p_i = \frac{\|\mathbf{A}_{(i)}\|^2_2}{\norm{\mathbf{A}}_{\square}^2}$.
    \STATE get a perturbation of the $i$-th block of $b$, $\tilde{b}_{(i)} = b_{(i)} + \varepsilon_{(i)}$ with $ \EE{\varepsilon_{(i)}} = 0, \EE{\norm{\varepsilon_{(i)}}^{2}} = \sigma_{i}^{2}.$
    \STATE update $x^{*}_{k+1} = x^{*}_k - \eta_k \frac{\mathbf{A}_{(i)}^{\top} (\mathbf{A}_{(i)}x_k - \Tilde{b}_{(i)})}{\|\mathbf{A}_{(i)}\|^2_2}$ 
    \vspace{0.2cm}
    \STATE update $x_{k+1} = \nabla f^*(x^*_{k+1})$
    \vspace{0.2cm}
    \STATE increment $k =  k+1$
    \UNTIL{a stopping criterion is satisfied}
    \RETURN $x_{k+1}$
  \end{algorithmic}
\end{algorithm}

\subsection{Related work}
\paragraph{Randomized Kaczmarz.}
 In a large data regime, full matrix operations can be expensive or even infeasible. Then it appears desirable to use iterative algorithms
with low computational cost and storage per iteration that produce good approximate solutions of~\eqref{eq:PB} after relatively few iterations. The Kaczmarz method~\cite{Kac37} and its randomized variants~\cite{SV09,gower2015randomized} are used to compute the minimum $\ell_2$-norm solutions of consistent linear systems.  In each iteration $k$, a row vector $a_i^\top$
of $\mathbf{A}$ is chosen at random from the system $\mathbf{A}x = b$ and the current iterate $x_k$ is projected onto the solution space of that equation to obtain $x_{k+1}$. It has been observed that the convergence of the randomized Kaczmarz (RK) method can be accelerated by introducing relaxation. In a relaxed variant of RK, a step is taken in the direction of this projection with the size of the step depending on a relaxation parameter. Explicitly, the relaxed RK update is given by
\begin{equation}
    \label{eq:RK}
    x_{k+1} = x_k - \eta_{k} \frac{\langle a_i, x_k \rangle - b_i}{\|a_i\|^2_2} \cdot a_i
\end{equation}
with initial values $x_0 = 0$, where the $\eta_{k} \in (0,2)$ is the stepsize. This update rule requires low cost per iteration and storage of order $\mathcal{O}(n)$. The first convergence rate for RK was given in~\cite{SV09} for consistent system and $\eta_k=1$. 

To further improve the efficiency of the RK method, the idea of parallelism was used, such as in ~\cite{gower2019adaptive,moorman2021randomized,necoara2019faster}. The basic idea is to use multiple rows
in each step of the iteration. This will increase the cost of each step of the iteration, but it can reduce the number of iterations as expected. In addition to the above work, there are many studies aimed at accelerating the (randomized) Kaczmarz method by the use of block strategies, sampling schemes, or extrapolation; for example, see ~\cite{bai2018greedy,eldar2011acceleration,haddock2021greed,liu2016accelerated,miao2022greedy,needell2014paved,steinerberger2021weighted}. 

Needell, in~\cite{needell2010randomized} extended the result of~\cite{SV09} to the inconsistent case when only a noisy right-hand side $\Tilde{b}$ is given where $\Tilde{b} = b + \varepsilon$ with arbitrary noise vector $\varepsilon$ such that $|\varepsilon_i| \leq \sigma \|a_i\|$. They proved that in the case $\eta_k = 1$, the iterates in Eq~\eqref{eq:RK} are expected to reach an error threshold in the order of the noise-level with the same rate as in the noiseless case, cf~\cite{needell2010randomized,SV09}. An approach was just recently proposed by Haddock et al.~\cite{haddock2022quantile} to recover the true solution, with a high likelihood as long as the right-hand side has sufficiently small corrupted entries, for a certain class of random matrices. They considered the setting where a fraction of the entries have been corrupted (possibly by arbitrarily large numbers). Their result was generalized for any matrix in~\cite{steinerberger2023quantile} and for sparse solutions in~\cite{zhang2022quantile}.

Just recently, Marshall et al. in~\cite{marshall2023optimal} proposed an approach that is complementary to the ones in~\cite{haddock2022quantile,steinerberger2023quantile,zhang2022quantile}. Instead of considering the setting where a small fraction of the right-hand side entries have been corrupted, they allow for the corruption of all elements of $b$ but assume that corruptions are independent zero-mean random variables so that the noisy right-hand side $\Tilde{b}$ is fixed. They showed that we can recover the solution $\hat x$ to any accuracy if we have access to a sufficient number of equations when the rows are sampled without replacement. This assumption guarantees that the noise in each iterate is independent from previous noise the algorithm has seen until then (and we modify this assumption to the assumption of independent noise). However, their results hold for only one pass over the full matrix, i.e. the number of rows of the matrix. Therefore with this restriction is not possible in practice to recover the true solution $\hat x$ after one epoch only.

In the context of inverse problems, Kaczmarz-type methods are often used for non-linear problems \cite{haltmeier2007kaczmarz,kaltenbacher2008} but also linear inverse problems have been considered, e.g. in~\cite{elfving2014semi,jiao2017preasymptotic} and in Hilbert space in~\cite{lu2022stochastic}.

\paragraph{Randomized sparse Kaczmarz.}
The randomized Kaczmarz method has been adapted to the randomized sparse Kaczmarz method (RSK)~\cite{LWSM14,LS19} which has almost the same low cost and storage requirements and has shown good performance in approximating sparse solutions of large consistent linear systems. It belongs to a more general family of methods called Bregman-Kaczmarz (BK) methods. The Bregman-Kaczmarz~\cite{10178390} uses two variables $x^*_{k}$
 and $x_k$ and its update is given by :
 \begin{equation}
\begin{aligned}
\label{eq:Bregman_kaczmarz}
    x^{*}_{k+1} &= x^{*}_k - \eta_k \frac{\langle a_{i}, x_k \rangle - b_{i}}{\|a_{i}\|^2_2} \cdot  a_{i}  \\
    x_{k+1} &= \nabla f^*(x^{*}_{k+1})
\end{aligned}
\end{equation}
with initial values $x_0 = \nabla f^*(x^*_0)$. The papers~\cite{lorenz2014linearized,LS19} analyze this method by interpreting it as a sequential, randomized Bregman projection method (where the Bregman projection is done with respect to the function $f$).
Variations of RSK including block/averaging variants~\cite{du2020randomized,P15}, averaging methods~\cite{tondji2022faster} and adaptions to least squares problems are given in \cite{schopfer2022extended,zouzias2013randomized}. In those variants, one usually needs to have access to more than one row of the matrix $\mathbf{A}$ at the cost of increasing the memory.

It has been shown in~\cite{LS19} that for a consistent system $\mathbf{A}x=b$
 with an arbitrary matrix $\mathbf{A}$ the iterates $x_k$ of the sparse Kaczmarz $(\eta_k = 1)$ converge linearly to the unique solution $\hat x$  of the regularized Basis Pursuit problem, which is Eq~\eqref{eq:Bregman_kaczmarz} with $f(x):= \lambda \cdot \|x\|_1 + \tfrac{1}{2}\|x\|_2^2$. More precisely under the assumption that the row index $i_k$ at iteration $k$ is chosen
randomly with probability proportional to $\|a_{i_k}\|^2$ they proved that :
\begin{equation}
\label{eq:linear_conv}
    \mathbb{E}[\|x_k -\hat{x}\|^2_2] \leq 2(1 - q)^k f(\hat x)
\end{equation}
where $q \in (0,1)$. For more details, we refer the reader to~\cite{LS19} and an acceleration for update~\eqref{eq:Bregman_kaczmarz} was given in~\cite{10178390} for any objective function $f$. In~\cite{LS19}, they also consider the case of a noisy linear system: instead of having access to the right-hand side of
the consistent linear system $\mathbf{A}x = b$, we are given $\Tilde{b} = b + \varepsilon$, where the entries of $\varepsilon$ satisfy $|\varepsilon_i| \leq \sigma \|a_i\|$. Under these assumptions, they proved that the sparse Kaczmarz with $\eta_k = 1$ satisfies
\begin{equation}
\label{eq:linear_conv_noisy}
    \mathbb{E}[\|x_k -\hat{x}\|^2_2] \leq 2(1 - q)^k f(\hat x) + \frac{\sigma^2}{q}
\end{equation}
that is, we converge in expectation until we reach some ball of radius $\frac{\sigma}{\sqrt{q}}$ around the solution. 

\subsection{Contribution}
In this paper, we make the following contributions:
\begin{itemize}
    \item We show that an adaptive stepsize similar to the one proposed in~\cite{marshall2023optimal} can also be applied in the case of the Bregman-Kaczmarz setting.
    \item  We extend the analysis to the block case.
    \item We replace the assumption that the rows or blocks are sampled without replacement with the assumption of independent noise and prove that the method with adaptive stepsize does indeed converge to the true solution if the data obey the independent noise assumption.
    \item We illustrate that this adaptive stepsize can be implemented in practice by using a heuristic similar to the one from~\cite{marshall2023optimal}, that estimates the needed hyperparameter only from given data and shows that this heuristic works well in practice.
    \item We investigate the convergence of the method with adaptive stepsize in detail and show that it leads to linear convergence of the iterates at the beginning and an $\mathcal{O}(1/k)$ rate of convergence later on.
\end{itemize}

\subsection{Outline}
The remainder of the paper is organized as follows. Section~\ref{sec:basicnotions} provides notations and a brief overview on convexity and Bregman distances. We state error bound conditions that hold for our objective function. Section~\ref{sec:convergence_analysis} provides convergence guarantees for our proposed method which depend on some hyperparameters. In Section~\ref{sec:heuristic}, we derive heuristics estimation of those hyperparameters. In Section~\ref{sec:numerical_experiment}, numerical experiments demonstrate the effectiveness of our method and provide insight regarding its behavior and its hyperparameters. Finally,
Section~\ref{sec:conclusion} draws some conclusions.

\section{Notation and basic notions}
\label{sec:basicnotions}

We will analyze the convergence of the Algorithm~\ref{alg:BK} with the help of the Bregman distance with respect to the objective function $f$. To this end
we recall some well-known concepts and properties of convex functions.

Let $f:\RR^n \to \RR$ be convex and since it is finite everywhere, it is also continuous. The \emph{subdifferential} of $f$ at any $x \in \RR^n$ is defined by
\[
\partial f(x) \eqdef \{x^* \in \RR^n| f(y) \ge f(x) + \langle x^*, y-x \rangle, \forall\, y \in \RR^n \},
\]
which is nonempty, compact and convex. The function  $f$ is said to be \emph{$\alpha$-strongly convex}, if for all $x,y \in \RR^n$ and subgradients $x^* \in \partial f(x)$ we have
\[
f(y) \geq f(x) + \langle x^*, y-x \rangle + \tfrac{\alpha}{2} \cdot \|y-x\|_2^2 \,.
\]
If $f$ is $\alpha$-strongly convex, then $f$ is coercive, i.e. $\lim_{\|x\|_2 \to \infty} f(x)=\infty$,
and its \emph{Fenchel conjugate} $f^{*}:\RR^n \to \RR$ given by 
\[
f^*(x^*)\eqdef \sup_{y \in \RR^n} \scp{x^*}{y} - f(y)
\]
is also convex, finite everywhere and coercive. Additionally, $f^*$ is differentiable with a \emph{Lipschitz-continuous gradient} with constant $L_{f^*}=\frac{1}{\alpha}$, i.e. for all $x^*,y^* \in \RR^n$ we have
\[
\|\nabla f^*(x^*)-\nabla f^*(y^*)\|_2 \le L_{f^*} \cdot \|x^*-y^*\|_2 \,,
\]
which implies the estimate
\begin{align}\label{eq:Lip}
f^*(y^*)
\le f^*(x^*) +\scp{\nabla f^*(x^*)}{y^*-x^*} + \tfrac{L_{f^*}}{2}\|x^{*}-y^{*}\|_2^2. 
\end{align}

\begin{definition} \label{def:D}
The \emph{Bregman distance} $D_f^{x^*}(x,y)$ between $x,y \in \RR^n$ with respect to $f$ and a subgradient $x^* \in \partial f(x)$ is defined as
\[
D_f^{x^*}(x,y) \eqdef f(y)-f(x) -\scp{x^*}{y - x}\,.
\]
\end{definition}
Fenchel's equality states that $f(x) + f^*(x^*) = \scp{x}{x^*}$ if $x^*\in\partial f(x)$ and implies that the Bregman distance can be written as
\[
D_f^{x^*}(x,y) = f^*(x^*)-\scp{x^*}{y} + f(y)\,.
\]

We will use the function
\begin{equation} \label{eq:spf}
f(x) \eqdef \lambda\|x\|_1 + \tfrac{1}{2}\|x\|_2^{2}
\end{equation}
which is strongly convex with constant $\alpha=1$. Its conjugate function is the soft shrinkage operator $S_{\lambda}$ which is defined componentwise by $(S_{\lambda}(x))_j = \max\{|x_j|-\lambda,0\} \cdot \signum(x_j)$
\[ 
f^{*}(x^{*}) = \tfrac{1}{2}\|S_{\lambda}(x^{*})\|_2^{2}, \quad \mbox{with} \quad \nabla f^{*}(x^{*}) = S_{\lambda}(x^{*}) \,.
\]
For any $x^*=x+\lambda \cdot s \in \partial f(x)$, with  $s \in \partial (\|x\|_1)$, we have
\[
D_f^{x^*}(x,y)=\tfrac{1}{2}\|x-y\|_2^2 + \lambda \cdot(\|y\|_1-\scp{s}{y}) \,.
\]
which gives us $D_f^{x^*}(x,y)=\tfrac{1}{2}\|x-y\|_2^2$ for $\lambda = 0$.

In general we have for an $\alpha$-strongly convex function that 
\begin{align} 
\label{eq:D}
  D_f^{x^*}(x,y) \geq \frac{\alpha}{2} \|x-y\|_2^2, 
\end{align}
see~\cite{lorenz2014linearized}, and hence $D_f^{x^*}(x,y) = 0 \Leftrightarrow x=y.$


To obtain convergence rates for the solution algorithm, we will estimate the Bregman distance of the iterates to the solution $\hat x$ by error bounds of the form $D_f^{x^*}(x,\hat x) \leq \theta^{-1} \cdot \|\mathbf{A}x - b\|^2_2$. We will see that such error bounds always hold if $f$ has a Lipschitz-continuous gradient. But they also hold under weaker conditions. For example problem~\eqref{eq:PB} with objective function defined by~\eqref{eq:spf}, it holds that 
\begin{equation*}
    D_f^{x^*}(x,\hat x) \leq \theta(\hat x)^{-1} \cdot \|\mathbf{A}x - b\|^2_2,
\end{equation*}
we refer the reader to~\cite[Theorem 3.9]{schopfer2022extended} for more details on the constant $\theta(\hat x)$. 
To clarify the assumptions under which such error bounds hold for more general objective functions, we are going to define notions~\cite{schopfer2022extended} such as calmness, linearly regularity, and linear growth. Let $B_2$ denote the closed unit ball of the $\ell_2$-norm.

\begin{definition}
The (set-valued) subdifferential mapping $\partial f:\RR^n \rightrightarrows \RR^n$ is \emph{calm} at $\hat{x}$ if there are constants $\varepsilon,\ell>0$ such that
\begin{equation}
\partial f(x) \subset \partial f(\hat{x}) + \ell \cdot \|x-\hat{x}\|_2 \cdot B_2 \quad \mbox{for any $x$ with} \quad \|x-\hat{x}\|_2\le \varepsilon\,. \label{eq:calm}
\end{equation}
\end{definition}

Calmness is a local growth condition akin to Lipschitz-continuity
of a gradient mapping, but for fixed $\hat x$. Of course, any Lipschitz-continuous
gradient mapping is calm everywhere. Moreover, the subdifferential mapping of any convex piecewise linear quadratic function is calm everywhere. In particular, this holds for the function defined in eq~\eqref{eq:spf}. For more examples on calmness, we refer the reader to~\cite{schopfer2022extended}.

\begin{definition}
Let $\partial f(x) \cap \mathcal{R}(\mathbf{A}^{\top}) \neq \emptyset.$ Then the collection $\{\partial f(\hat x), \mathcal{R}(\mathbf{A}^{\top})\}$ is linearly regular, if there is a constant $\zeta >0$ such that for all $x^{*} \in \RR^n$ we have
\begin{equation}
\label{eq:linreg}
    \dist(x^{*}, \partial f(\hat x) \cap \mathcal{R}(\mathbf{A}^{\top})) \leq \zeta \cdot \bigg(\dist(x^{*}, \partial f(\hat x)) + \dist(x^{*}, \mathcal{R}(\mathbf{A}^{\top}))\bigg)
\end{equation}
\end{definition}
The collection $\{\partial f(\hat x), \mathcal{R}(\mathbf{A}^{\top})\}$ is linearly regular, if $\partial f(\hat x)$ is polyhedral (which holds for piecewise linear-quadratic $f$ in particular).

\begin{definition}
We say the subdifferential mapping of $f$ \emph{grows at most linearly}, if there exist $\rho_1,\rho_2 \ge 0$ such that for all $x \in \RR^n$ and $x^* \in \partial f(x)$ we have
\begin{equation} \label{eq:lineargrowth}
\|x^*\|_2 \le \rho_1 \cdot \|x\|_2 + \rho_2\,.
\end{equation}
\end{definition}
Any Lipschitz-continuous gradient mapping grows at most linearly. Furthermore, the subdifferential mappings of the functions from~\eqref{eq:spf} grow at most linearly.

\begin{theorem}(cf.~\cite[Theorem 3.9]{schopfer2022extended})
\label{th:error_bounds_equality}
Consider the linearly constrained optimization problem~\eqref{eq:PB} and strongly convex $f:\RR^n \to \RR$.  If its subdifferential mapping grows at most linearly, is calm at the unique solution $\hat x$ of \eqref{eq:PB}, and if the collection $\{\partial f(\hat x), \mathcal{R}(\mathbf{A}^{\top})\}$ is linearly regular, then there exists $\theta(\hat x)$ such that for all $x \in \RR^n$ and $x^* \in \partial f(x) \cap \mathcal{R}(\mathbf{A}^{\top})$ we have the global error bound :
\begin{equation}
    \label{eq:EB}
    D_f^{x^*}(x,\hat x)  \leq \dfrac{1}{\theta(\hat x)}  \cdot \|\mathbf{A}x-b\|_2^2
\end{equation}
\end{theorem}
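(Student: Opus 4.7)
The plan is to combine the elementary Bregman identity
$$D_f^{x^*}(x,\hat x) + D_f^{\hat x^*}(\hat x,x) = \langle x^*-\hat x^*,\, x-\hat x\rangle$$
with spectral properties of $\mathbf{A}$ restricted to its range, and then upgrade the resulting pointwise bound from a local one (via calmness and linear regularity) to a global one (via linear growth). The starting point is that KKT for~\eqref{eq:PB} guarantees a Lagrange multiplier $\hat\lambda\in\RR^m$ with $\hat x^{*}\eqdef \mathbf{A}^{\top}\hat\lambda\in\partial f(\hat x)\cap\mathcal{R}(\mathbf{A}^{\top})$, so the intersection is nonempty.

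First I would derive a \emph{preliminary estimate} independent of the regularity hypotheses. Dropping the nonnegative term $D_f^{\hat x^*}(\hat x,x)$ in the identity above and using that $x^{*}-\hat x^{*}\in\mathcal{R}(\mathbf{A}^{\top})$, I write $x^{*}-\hat x^{*}=\mathbf{A}^{\top}w$ with $w$ of minimum norm, so $w$ lies in the row space of $\mathbf{A}$ and satisfies $\|w\|_2\le \|x^{*}-\hat x^{*}\|_2/\sigma_{\min}^{+}(\mathbf{A})$, where $\sigma_{\min}^{+}(\mathbf{A})$ denotes the smallest positive singular value. Since $\mathbf{A}\hat x = b$, this yields
$$D_f^{x^{*}}(x,\hat x)\;\le\;\langle w,\,\mathbf{A}x-b\rangle\;\le\;\frac{\|x^{*}-\hat x^{*}\|_2}{\sigma_{\min}^{+}(\mathbf{A})}\,\|\mathbf{A}x-b\|_2.$$

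Second, I would establish a local error bound on $\{x:\|x-\hat x\|_2\le\varepsilon\}$. Since $x^{*}\in\mathcal{R}(\mathbf{A}^{\top})$ we have $\dist(x^{*},\mathcal{R}(\mathbf{A}^{\top}))=0$, so linear regularity~\eqref{eq:linreg} combined with calmness~\eqref{eq:calm} allows us to select $\hat x^{*}\in\partial f(\hat x)\cap\mathcal{R}(\mathbf{A}^{\top})$ with $\|x^{*}-\hat x^{*}\|_2\le\zeta L\|x-\hat x\|_2$. Plugging this into the preliminary estimate and invoking strong convexity~\eqref{eq:D} in the form $\|x-\hat x\|_2\le\sqrt{2D_f^{x^{*}}(x,\hat x)/\alpha}$ gives, after dividing by $\sqrt{D_f^{x^{*}}(x,\hat x)}$ and squaring,
$$D_f^{x^{*}}(x,\hat x)\;\le\;\frac{2\zeta^{2}L^{2}}{\alpha\,(\sigma_{\min}^{+}(\mathbf{A}))^{2}}\;\|\mathbf{A}x-b\|_2^{2},$$
which is the desired bound, locally, with an explicit constant $\theta_{\mathrm{loc}}(\hat x)$.

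Finally I would extend the bound globally on $\{x:\|x-\hat x\|_2>\varepsilon\}$, which is the main obstacle. Here the linear growth hypothesis~\eqref{eq:lineargrowth} enters: it implies $\|x^{*}-\hat x^{*}\|_2\le\rho_1\|x-\hat x\|_2+\rho_2'$, so the same chain of inequalities still yields an \emph{affine} upper bound on $D_f^{x^{*}}(x,\hat x)$ in terms of $\|\mathbf{A}x-b\|_2\cdot(\rho_1\|x-\hat x\|_2+\rho_2')$. Combined with the lower bound $\frac{\alpha}{2}\|x-\hat x\|_2^{2}\le D_f^{x^{*}}(x,\hat x)$ from strong convexity and the exclusion of a neighborhood of $\hat x$, one can absorb the lower-order term by enlarging the constant, producing a global constant $\theta(\hat x)$ that is the minimum of $\theta_{\mathrm{loc}}(\hat x)$ and the far-field constant. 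The delicate step is the matching of the quadratic lower growth of the Bregman distance with a quadratic lower growth of $\|\mathbf{A}x-b\|_2^{2}$ in the annulus $\|x-\hat x\|_2>\varepsilon$ under the constraint $x^{*}\in\mathcal{R}(\mathbf{A}^{\top})$, which is where the geometry of $\hat x$ enters the constant $\theta(\hat x)$; the detailed bookkeeping is carried out in~\cite[Thm.~3.9]{schopfer2022extended}.
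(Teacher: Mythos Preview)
The paper does not give its own proof of this theorem: it is stated with the label ``cf.~\cite[Theorem 3.9]{schopfer2022extended}'' and then used as a black box in the convergence analysis of Section~\ref{sec:convergence_analysis}. So there is nothing in the present paper to compare your argument against; the proof lives entirely in the cited reference.

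That said, your sketch is a faithful outline of the argument one finds there. The local step is correct as written: the three-point identity, the bound $\|w\|_2\le\|x^{*}-\hat x^{*}\|_2/\sigma_{\min}^{+}(\mathbf{A})$ for $w$ in the row space, calmness to control $\dist(x^{*},\partial f(\hat x))$, linear regularity to pass to $\partial f(\hat x)\cap\mathcal{R}(\mathbf{A}^{\top})$, and then the bootstrap via $\|x-\hat x\|_2\le\sqrt{2D_f^{x^{*}}(x,\hat x)/\alpha}$ all combine exactly as you indicate. For the far-field step you were a bit vague, but in fact the absorption is straightforward and does not require any separate lower bound on $\|\mathbf{A}x-b\|_2$: on $\{\|x-\hat x\|_2>\varepsilon\}$ one has $\rho_2'\le(\rho_2'/\varepsilon)\|x-\hat x\|_2$, so linear growth gives $\|x^{*}-\hat x^{*}\|_2\le(\rho_1+\rho_2'/\varepsilon)\|x-\hat x\|_2$, and then the \emph{same} bootstrap as in the local case closes the argument with a far-field constant $2(\rho_1+\rho_2'/\varepsilon)^{2}/(\alpha\,\sigma_{\min}^{+}(\mathbf{A})^{2})$. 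Your ``matching of quadratic lower growths'' phrasing made this sound harder than it is.
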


\section{Convergence analysis}
\label{sec:convergence_analysis}

To start the convergence analysis of the Bregman-Kaczmarz method (Algorithm~\ref{alg:BK}), we first characterize how the Bregman distance of the iterates to the solution $\hat x$ decays from one iteration to the next one. By proper scaling of $f$, we can always assume that $f$ is $1$-strongly convex, i.e. we have $\alpha=1$, and consequently by~\eqref{eq:D} we can assume that the Bregman distance is an upper bound to the squared norm, i.e. we always have 
\begin{align*}
\frac{1}{2}\norm{x_{k}-\hat{x}}^{2} \leq D_{f}^{x_{k}^{*}}(x_{k},\hat{x}) .
\end{align*} 

\begin{lemma}
\label{lem:nWRSK_x_descent}
Let $(x_k, x_k^*)$ be the two sequences generated by Algorithm~\ref{alg:BK}. Then, it holds that
\begin{align}
\label{eq:descent_lem}
     D_f^{x_{k+1}^*}(x_{k+1},\hat x) &\leq D_f^{x_{k}^*}(x_{k},\hat x) - \frac{\eta_k (2-\eta_k)}{2} \frac{\|\mathbf{A}_{(i_k)}x_k - b_{(i_k)}\|^2}{\|\mathbf{A}_{(i_k)}\|_2^2} \nonumber \\ 
     &+ \frac{\eta_k^2}{2} \frac{\|\varepsilon_{(i_k)}\|^2}{\|\mathbf{A}_{(i_k)}\|^2_2} + \eta_k(1-\eta_k) \varepsilon_{(i_k)}^{\top} \cdot \frac{( \mathbf{A}_{(i_k)}x_k - b_{(i_k)} )}{\|\mathbf{A}_{(i_k)}\|^2_2}
\end{align}
\end{lemma}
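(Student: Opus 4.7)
The plan is to expand the one-step Bregman distance using the conjugate representation $D_f^{x^*}(x,\hat x)=f^*(x^*)-\scp{x^*}{\hat x}+f(\hat x)$ (which follows from Fenchel's equality stated just after Definition~\ref{def:D}), and then exploit the $1$-Lipschitz continuity of $\nabla f^*$ (which holds because we may assume $\alpha=1$, so $L_{f^*}=1$). Writing $d_k \eqdef x^*_{k+1}-x^*_k$, the two Bregman distances differ only through $f^*$:
\begin{equation*}
D_f^{x^*_{k+1}}(x_{k+1},\hat x)-D_f^{x^*_{k}}(x_{k},\hat x)=f^*(x^*_{k+1})-f^*(x^*_k)-\scp{d_k}{\hat x}.
\end{equation*}
Using the Lipschitz estimate~\eqref{eq:Lip} together with $\nabla f^*(x^*_k)=x_k$ yields the key inequality
\begin{equation*}
D_f^{x^*_{k+1}}(x_{k+1},\hat x)-D_f^{x^*_{k}}(x_{k},\hat x)\le \scp{x_k-\hat x}{d_k}+\tfrac{1}{2}\norm{d_k}^2.
\end{equation*}

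Next I would substitute the Algorithm~\ref{alg:BK} update $d_k=-\eta_k\,\mathbf{A}_{(i_k)}^\top(\mathbf{A}_{(i_k)}x_k-\tilde b_{(i_k)})/\norm{\mathbf{A}_{(i_k)}}_2^2$ into the two terms on the right. For the inner product, I would use consistency $\mathbf{A}\hat x = b$ to rewrite $\mathbf{A}_{(i_k)}(x_k-\hat x)=\mathbf{A}_{(i_k)}x_k-b_{(i_k)}$, and abbreviate this residual by $r_k$; then
\begin{equation*}
\scp{x_k-\hat x}{d_k}=-\tfrac{\eta_k}{\norm{\mathbf{A}_{(i_k)}}_2^2}\scp{r_k}{r_k-\varepsilon_{(i_k)}}.
\end{equation*}
For the squared-norm term I would bound $\norm{\mathbf{A}_{(i_k)}^\top v}^2\le \norm{\mathbf{A}_{(i_k)}}_2^2\,\norm{v}^2$ (the single place where an inequality rather than an identity appears), giving
\begin{equation*}
\tfrac{1}{2}\norm{d_k}^2\le \tfrac{\eta_k^2}{2\norm{\mathbf{A}_{(i_k)}}_2^2}\norm{r_k-\varepsilon_{(i_k)}}^2.
\end{equation*}

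The remaining work is purely algebraic: expand $\norm{r_k-\varepsilon_{(i_k)}}^2=\norm{r_k}^2-2\scp{r_k}{\varepsilon_{(i_k)}}+\norm{\varepsilon_{(i_k)}}^2$, add the two contributions, and collect the coefficients of $\norm{r_k}^2$, $\scp{r_k}{\varepsilon_{(i_k)}}$ and $\norm{\varepsilon_{(i_k)}}^2$. This yields precisely
\begin{equation*}
-\tfrac{\eta_k(2-\eta_k)}{2}\,\tfrac{\norm{r_k}^2}{\norm{\mathbf{A}_{(i_k)}}_2^2}+\eta_k(1-\eta_k)\,\tfrac{\varepsilon_{(i_k)}^\top r_k}{\norm{\mathbf{A}_{(i_k)}}_2^2}+\tfrac{\eta_k^2}{2}\,\tfrac{\norm{\varepsilon_{(i_k)}}^2}{\norm{\mathbf{A}_{(i_k)}}_2^2},
\end{equation*}
which is the claimed right-hand side.

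There is no real obstacle: this is a deterministic (pathwise) inequality conditional on the draws $i_k$ and $\varepsilon_{(i_k)}$, so no probabilistic manipulation is needed at this stage. The only non-trivial point is being careful that the $\norm{\mathbf{A}_{(i_k)}}_2^2$ denominator ends up with the correct single power, which is ensured by combining the spectral-norm inequality for $\mathbf{A}_{(i_k)}^\top$ with the $\norm{\mathbf{A}_{(i_k)}}_2^4$ denominator coming from $\norm{d_k}^2$. All other steps are exact identities.
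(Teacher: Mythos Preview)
Your proof is correct and uses the same essential ingredients as the paper --- the descent lemma~\eqref{eq:Lip} for $f^*$, the spectral-norm bound $\norm{\mathbf{A}_{(i_k)}^\top v}^2\le\norm{\mathbf{A}_{(i_k)}}_2^2\norm{v}^2$, and the expansion of $\norm{r_k-\varepsilon_{(i_k)}}^2$. The paper arrives at the same key inequality $D_f^{x^*_{k+1}}(x_{k+1},\hat x)\le D_f^{x^*_{k}}(x_{k},\hat x)+\scp{d_k}{x_k-\hat x}+\tfrac12\norm{d_k}^2$ but takes a detour through an auxiliary point $\tilde x_k=\hat x+\mathbf{A}_{(i_k)}^\top\varepsilon_{(i_k)}/\norm{\mathbf{A}_{(i_k)}}_2^2$ which, as your argument shows, is unnecessary; your direct use of the conjugate representation of the Bregman distance is cleaner.
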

 \begin{proof}
 Using  $\tilde{b}_{(i)} = b_{(i)} + \varepsilon_{(i)}$ and defining
 $$\Tilde{x}_k := \hat x + \frac{\mathbf{A}_{(i_k)}^{\top}(\Tilde{b}_{(i_k)} - b_{(i_k)})}{\|\mathbf{A}_{(i_k)}\|^2_2}.$$ Since $\nabla f^*$ is $1$-Lipschitz-continuous, the descent lemma~\eqref{eq:Lip} implies
 \begin{align*}
 D_f^{x_{k+1}^*}(x_{k+1},\Tilde{x}_k) &= f^*(x_{k+1}^*) + f(\Tilde{x}_k) - \langle x_{k+1}^*,\Tilde{x}_k\rangle \\
 &\leq f^*(x_k^*) + \langle \nabla f^*(x_k^*),x_{k+1}^*-x_k^*\rangle + \frac{1}{2}\|x_{k+1}^*-x_k^*\|_2^2 + f(\Tilde{x}_k) - \langle x_{k+1}^*,\Tilde{x}_k\rangle.  
 \end{align*}

 \noindent Using the fact that $x_k=\nabla f^*(x_k^*)$ we conclude
 \begin{align*}
 D_f^{x_{k+1}^*}(x_{k+1},\Tilde{x}_k) &\leq 
 D_f^{x_k^*}(x_k,\Tilde{x}_k) + \dotprod{ x_k, \ - \eta_k \frac{\mathbf{A}_{(i_k)}^{\top} (\mathbf{A}_{(i_k)}x_k - \Tilde{b}_{(i_k)})}{\|\mathbf{A}_{(i_k)}\|^2_2} } \\
 & \hspace{0.5cm} + \frac{1}{2} \Bigl\| - \eta_k \frac{\mathbf{A}_{(i_k)}^{\top} (\mathbf{A}_{(i_k)}x_k - \Tilde{b}_{(i_k)})}{\|\mathbf{A}_{(i_k)}\|^2_2} \Bigr\|_2^2   +  \Big\langle  \eta_k \frac{\mathbf{A}_{(i_k)}^{\top} (\mathbf{A}_{(i_k)}x_k - \Tilde{b}_{(i_k)})}{\|\mathbf{A}_{(i_k)}\|^2_2},\Tilde{x}_k \Big\rangle  \\
 &= D_f^{x_k^*}(x_k,\Tilde{x}_k) - \frac{\eta_k}{\|\mathbf{A}_{(i_k)}\|^2_2} \Big \langle \mathbf{A}_{(i_k)}^{\top}(\mathbf{A}_{(i_k)}x_k - \Tilde{b}_{(i_k)}), x_k - \Tilde{x}_k  \Big \rangle \\
 & \hspace{0.5cm} + \frac{\eta_k^2}{2\|\mathbf{A}_{(i_k)}\|^2_2} \|\mathbf{A}_{(i_k)}x_k - \Tilde{b}_{(i_k)} \|^2_2 \\
 &= D_f^{x_k^*}(x_k,\Tilde{x}_k) + \langle x_{k+1}^* - x_{k}^*, x_k - \Tilde{x}_k\rangle + \frac{\eta_k^2}{2\|\mathbf{A}_{(i_k)}\|^2_2} \|\mathbf{A}_{(i_k)}x_k - \Tilde{b}_{(i_k)} \|^2_2
 \end{align*}
 Unfolding the expression of $ D_f^{x_{k+1}^*}(x_{k+1},\Tilde{x}_k)$ and $D_f^{x_{k}^*}(x_{k},\Tilde{x}_k),$ and adding $f(\hat x)$ to both sides, we obtain the inequality
 \begin{equation}
 \label{eq:nwrsk1}
      D_f^{x_{k+1}^*}(x_{k+1},\hat x) \leq D_f^{x_{k}^*}(x_{k},\hat x) + \langle x_{k+1}^* - x_{k}^*, \Tilde{x}_k - \hat x\rangle + \langle x_{k+1}^* - x_{k}^*, x_k - \Tilde{x}_k\rangle + \frac{\eta_k^2}{2\|\mathbf{A}_{(i_k)}\|^2_2} \|\mathbf{A}_{(i_k)}x_k - \Tilde{b}_{(i_k)} \|^2_2.
 \end{equation}
 By observing that $\Big \langle \mathbf{A}_{(i_k)}^{\top}(\mathbf{A}_{(i_k)}x_k - \Tilde{b}_{(i_k)}), x_k - \hat x  \Big \rangle = \|\mathbf{A}_{(i_k)}x_k - b_{(i_k)}\|^2_2 - \varepsilon_{(i_k)}^{\top} (\mathbf{A}_{(i_k)}x_k - b_{(i_k)})$, we conclude that
 \begin{equation}
 \label{eq:nwrsk2}
      \langle x_{k+1}^* - x_{k}^*, \Tilde{x}_k - \hat x\rangle + \langle x_{k+1}^* - x_{k}^*, x_k - \Tilde{x}_k\rangle =  - \frac{\eta_k}{\|\mathbf{A}_{(i_k)}\|^2_2}\Big \langle \mathbf{A}_{(i_k)}^{\top}(\mathbf{A}_{(i_k)}x_k - \Tilde{b}_{(i_k)}), x_k - \hat x  \Big \rangle,
 \end{equation}
 by rewriting 
 \begin{equation}
 \label{eq:nwrsk3}
\| \mathbf{A}_{(i_k)}x_k - \Tilde{b}_{(i_k)}\|^2_2 = \|\mathbf{A}_{(i_k)}x_k - b_{(i_k)}\|^2_2 + \|\varepsilon_{(i_k)}\|^2_2 - 2 \varepsilon_{(i_k)}^{\top} 
 (\mathbf{A}_{(i_k)}x_k - b_{(i_k)}).
 \end{equation}
 Inserting (\ref{eq:nwrsk2}) and (\ref{eq:nwrsk3}) into (\ref{eq:nwrsk1}), we obtain
 \begin{align*}
      D_f^{x_{k+1}^*}(x_{k+1},\hat x) &\leq D_f^{x_{k}^*}(x_{k},\hat x) - \frac{\eta_k (2-\eta_k)}{2} \frac{\|\mathbf{A}_{(i_k)}x_k - b_{(i_k)}\|^2_2}{\|\mathbf{A}_{(i_k)}\|_2^2} \\ 
      &+ \frac{\eta_k^2}{2} \frac{\|\varepsilon_{(i_k)}\|^2}{\|\mathbf{A}_{(i_k)}\|^2_2} + \eta_k(1-\eta_k) \varepsilon_{(i_k)}^{\top} \cdot \frac{(\mathbf{A}_{(i_k)}x_k - b_{(i_k)})}{\|\mathbf{A}_{(i_k)}\|^2_2}.
 \end{align*}
 \end{proof}

The term in~\eqref{eq:descent_lem} from Lemma~\ref{lem:nWRSK_x_descent} which is
linear in $\varepsilon_{(i_k)}$ is
\begin{align*}
  \eta_{k}(1-\eta_{k})\varepsilon_{(i_{k})}^{\top}\cdot\frac{( \mathbf{A}_{(i_k)}x_k - b_{(i_k)} )}{\|\mathbf{A}_{(i_k)}\|^2_2}
\end{align*}
and vanishes when the stepsize $\eta_k$ is one (i.e. we do full steps) regardless of the structure of the noise, but does not vanish in the case where we use adaptive stepsizes $\eta_k \neq 1$. However, we assume independent noise, i.e. $\varepsilon_{(i_{k})}$ is independent of all previous randomness in the algorithm. Since we also assume that the noise has zero mean, this term will vanish, when we take the expectation over the noise realization.

\begin{lemma}\label{lem:descent-D}
  Assume independent noise (cf. Section~\ref{sec:prob-statment}) and that $(x_{k},x_{k}^{*})$ are generated by Algorithm~\ref{alg:BK} and denote $\gamma = \tfrac{\theta(\hat x)}{\norm{\mathbf{A}}_{\square}^{2}}$ (where $\theta(\hat x)$ comes from Theorem~\ref{th:error_bounds_equality}). Then it holds that
  \begin{align}
    \label{eq:des_l}
    \mathbb{E}_{i_{k}}\mathbb{E}_{\varepsilon}[D_f^{x_{k+1}^*}(x_{k+1},\hat x)] &\leq \bigg( 1 -  \frac{\eta_k (2-\eta_k)}{2} \gamma\bigg) D_f^{x_{k}^*}(x_{k},\hat x) + \frac{\eta_k^2}{2}\frac{\sigma^{2}}{\norm{\mathbf{A}}_{\square}^{2}}. 
  \end{align}
\end{lemma}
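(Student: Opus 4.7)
The proof will start from the per-iteration descent already established in Lemma~\ref{lem:nWRSK_x_descent} and process its four terms by taking the two expectations in sequence. The first term does not involve any randomness and simply passes through. The plan is to take the inner expectation over the noise first (conditioned on the block index $i_k$ and the past history up to step $k$), and then the outer expectation over the block index with respect to the probabilities $p_i=\|\mathbf{A}_{(i)}\|_2^2/\|\mathbf{A}\|_\square^2$.

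For the inner expectation $\mathbb{E}_\varepsilon[\cdot \mid i_k]$, the independent-noise assumption is the key: $\varepsilon_{(i_k)}$ is drawn freshly and independently of $x_k$, $x_k^*$, and $i_k$, with $\mathbb{E}[\varepsilon_{(i_k)}]=0$ and $\mathbb{E}[\|\varepsilon_{(i_k)}\|^2]=\sigma_{i_k}^{2}$. Hence the cross term $\eta_k(1-\eta_k)\,\varepsilon_{(i_k)}^\top(\mathbf{A}_{(i_k)}x_k-b_{(i_k)})/\|\mathbf{A}_{(i_k)}\|_2^2$ vanishes in expectation, and the quadratic noise term becomes $\tfrac{\eta_k^{2}}{2}\sigma_{i_k}^{2}/\|\mathbf{A}_{(i_k)}\|_2^2$. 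Thus, conditioned on $i_k$, the bound reduces to the deterministic part plus $\tfrac{\eta_k^{2}}{2}\sigma_{i_k}^{2}/\|\mathbf{A}_{(i_k)}\|_2^2$.

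Next, I would take the expectation over $i_k$. The carefully chosen probabilities $p_i=\|\mathbf{A}_{(i)}\|_2^2/\|\mathbf{A}\|_\square^2$ cancel the block norms in the denominators, giving
\[
\mathbb{E}_{i_k}\!\left[\frac{\|\mathbf{A}_{(i_k)}x_k-b_{(i_k)}\|^2}{\|\mathbf{A}_{(i_k)}\|_2^2}\right]=\frac{\|\mathbf{A}x_k-b\|^2}{\|\mathbf{A}\|_\square^2},\qquad
\mathbb{E}_{i_k}\!\left[\frac{\sigma_{i_k}^{2}}{\|\mathbf{A}_{(i_k)}\|_2^2}\right]=\frac{\sigma^{2}}{\|\mathbf{A}\|_\square^2}.
\]
This produces a clean descent in the residual norm plus a noise floor of the advertised form.

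The final step is to convert the residual-norm decrease into a decrease in Bregman distance, which is where Theorem~\ref{th:error_bounds_equality} enters: provided $x_k^*\in\mathcal{R}(\mathbf{A}^\top)$, the error bound $D_f^{x_k^*}(x_k,\hat x)\le \theta(\hat x)^{-1}\|\mathbf{A}x_k-b\|^2$ holds, so that $-\|\mathbf{A}x_k-b\|^2/\|\mathbf{A}\|_\square^2\le -\gamma\, D_f^{x_k^*}(x_k,\hat x)$. Multiplying by $\tfrac{\eta_k(2-\eta_k)}{2}$ (which is nonnegative as long as $\eta_k\in[0,2]$, the natural regime we have in mind) and combining yields exactly~\eqref{eq:des_l}. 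The only subtlety worth flagging is the invariance $x_k^*\in\mathcal{R}(\mathbf{A}^\top)$: it holds as soon as the initialization $x_0^*$ lies in $\mathcal{R}(\mathbf{A}^\top)$ (e.g.\ $x_0^*=0$), since each update in Algorithm~\ref{alg:BK} adds a vector of the form $\mathbf{A}_{(i_k)}^\top v$ which is in $\mathcal{R}(\mathbf{A}^\top)$. This is the only non-routine point; everything else is a careful bookkeeping of the two conditional expectations.
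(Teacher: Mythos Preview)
Your proposal is correct and follows essentially the same route as the paper: start from Lemma~\ref{lem:nWRSK_x_descent}, take the expectation over the noise (killing the cross term and turning the quadratic noise term into $\sigma_{i_k}^2/\|\mathbf{A}_{(i_k)}\|_2^2$), then average over $i_k$ with the given probabilities to obtain $\|\mathbf{A}x_k-b\|^2/\|\mathbf{A}\|_\square^2$ and $\sigma^2/\|\mathbf{A}\|_\square^2$, and finally invoke Theorem~\ref{th:error_bounds_equality}. Your explicit remarks that $\eta_k(2-\eta_k)\ge 0$ is needed for the sign and that the invariance $x_k^*\in\mathcal{R}(\mathbf{A}^\top)$ (required to apply the error bound) is preserved by the update are welcome additions that the paper's proof leaves implicit.
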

\begin{proof}
 In~\eqref{eq:descent_lem} in Lemma~\ref{lem:nWRSK_x_descent} we take the expectation with respect to the choice of $i_{k}$ and the noise realization $\varepsilon$ and get (since the noise realization and the choice of the index are independent)
\begin{align*}
     \mathbb{E}_{i_{k}}\mathbb{E}_{\varepsilon}[D_f^{x_{k+1}^*}(x_{k+1},\hat x) ]\leq D_f^{x_{k}^*}(x_{k},\hat x) - \frac{\eta_k (2-\eta_k)}{2} \mathbb{E}_{i_{k}}\left[ \frac{\|\mathbf{A}_{(i_k)}x_k - b_{(i_k)}\|^2}{\|\mathbf{A}_{(i_k)}\|_2^2}\right] \\ 
     + \frac{\eta_k^2}{2} \mathbb{E}_{i_{k}}\mathbb{E}_{\varepsilon}\left[\frac{\|\varepsilon_{(i_k)}\|^2}{\|\mathbf{A}_{(i_k)}\|^2_2}\right] + \eta_k(1-\eta_k) \mathbb{E}_{\varepsilon}\left[\varepsilon_{(i_k)}^{\top}\right] \cdot \mathbb{E}_{i_{k}}\left[\frac{( \mathbf{A}_{(i_k)}x_k - b_{(i_k)} )}{\|\mathbf{A}_{(i_k)}\|^{2}}\right]
\end{align*}
We have $\mathbb{E}_{\varepsilon}\left[\varepsilon_{(i_k)}^{\top}\right] = 0$ and 
\begin{align*}
  \mathbb{E}_{i_{k}}\mathbb{E}_{\varepsilon}\left[\frac{\|\varepsilon_{(i_k)}\|^2}{\|\mathbf{A}_{(i_k)}\|^2_2}\right] & = \mathbb{E}_{i_{k}}\left[\tfrac{1}{\norm{\mathbf{A}_{(i_{k})}}^{2}}\mathbb{E}_{\varepsilon}\norm{\varepsilon_{(i_{k})}}^{2}\right] = \mathbb{E}_{i_{k}}\left[\frac{\sigma_{i_{k}}^{2}}{\norm{\mathbf{A}_{(i_{k})}}^{2}}\right]\\
                                                                                                  & = \sum_{j=1}^{M}p_{j}\frac{\sigma_{j}^{2}}{\norm{\mathbf{A}_{(j)}}^{2}} = \frac{\sum_{j=1}^M\sigma_{j}^{2}}{\sum_{j=1}^M\norm{\mathbf{A}_{(j)}}^{2}} = \frac{\sigma^{2}}{\norm{\mathbf{A}}_{\square}^{2}}.
\end{align*}
By Theorem~\ref{th:error_bounds_equality} we get
\begin{align*}
  \mathbb{E}_{i_{k}}\left[ \frac{\|\mathbf{A}_{(i_k)}x_k - b_{(i_k)}\|^2}{\|\mathbf{A}_{(i_k)}\|_2^2}\right] & = \sum\limits_{i=1}^M \frac{\|\mathbf{A}_{(i)}\|^2}{\norm{\mathbf{A}}_{\square}^{2}} \frac{\|\mathbf{A}_{(i)}x_k - b_{(i)}\|^2}{\|\mathbf{A}_{(i)}\|_2^2}\\
  & = \tfrac1{\norm{\mathbf{A}}_{\square}^{2}}\norm{\mathbf{A}x_{k}-b}^{2} \geq \tfrac{\theta(\hat x)}{\norm{\mathbf{A}}_{\square}^{2}}D_{f}^{x_{k}^{*}}(x_k,\hat x).
\end{align*}
Combining these inequalities proves the claim.
\end{proof}
Using the rule of total expectation we obtain 
\begin{align}
  \label{eq:des_l_EE}
    \EE{D_f^{x_{k+1}^*}(x_{k+1},\hat x)} &\leq \bigg( 1 -  \frac{\eta_k (2-\eta_k)}{2} \gamma\bigg) \EE{D_f^{x_{k}^*}(x_{k},\hat x)} + \frac{\eta_k^2}{2}\frac{\sigma^{2}}{\norm{\mathbf{A}}_{\square}^{2}}
\end{align}
where the expectation is taken over all randomness involved (i.e. all randomly chosen indices so far and all noise realizations).

\noindent To make the expectation on the left-hand side of~\eqref{eq:des_l} as small as possible, we aim to minimize the right-hand side and the only thing we have available to do so, is the stepsize $\eta_{k}$.
Although the right-hand side is quadratic in $\eta_{k}$, it is not clear how to do that, since we do not have all information that we need available, especially we do not know the distance $D_f^{x_{k}^*}(x_{k},\hat x)$ and the constant $\gamma$.
If we had no noise, i.e. $\sigma^{2}=0$, the right-hand side would be minimal for $\eta_{k}=1$, i.e. we would always do full steps.
We proceed by assuming that the quantities which are needed to achieve this goal are known (even if they are not in practice) and later in Section~\ref{sec:heuristic} show how these quantities can be estimated in practice. The derivation follows along the lines presented in~\cite{marshall2023optimal}.

We unroll the estimate~\eqref{eq:des_l_EE} recursively to get
\begin{align}\label{eq:des-l-EE-recursive}
 \EE{D_{f}^{x_{k+1}^{*}}(x_{k+1},\hat{x})}\leq \prod\limits_{j=0}^{k}\left( 1 - \tfrac{\eta_{j}(2-\eta_{j})}{2} \gamma\right) \EE{D_{f}^{x_{0}^{*}}(x_{0},\hat{x})} + \frac{\sigma^{2}}{\norm{\mathbf{A}}_{\square}^{2}} \sum\limits_{j=0}^{k-1}\frac{\eta_{j}^{2}}{2}\prod\limits_{i=j+1}^{k}\left( 1 - \tfrac{\eta_{i}(2-\eta_{i})}{2}\gamma \right)
\end{align}
(with the convention that the empty product equals one).

Let $\eta$ be the vector of all the values $\eta_{j}$ and define
\begin{align*}
    D_k(\eta) \eqdef \prod_{j=0}^{k} \bigg( 1 -  \frac{\eta_{j} (2-\eta_{j})}{2} \gamma\bigg) \frac{\norm{\mathbf{A}}_{\square}^{2}\,\mathbb{E}[D_f^{x_{0}^*}(x_{0},\hat x)]}{\sigma^2} + \sum_{j=0}^{k}\frac{\eta_{j}^2}{2} \prod_{i=j+1}^{k} \bigg( 1 -  \frac{\eta_i (2-\eta_i)}{2} \gamma\bigg).
\end{align*}
Then~\eqref{eq:des-l-EE-recursive} becomes
\begin{align}\label{eq:des-l-EE-D}
     \mathbb{E}[D_f^{x_{k+1}^*}(x_{k+1},\hat x)] \leq  D_k(\eta)\frac{\sigma^2}{\norm{\mathbf{A}}_{\square}^{2}}.
\end{align}
From~\eqref{eq:des-l-EE-D} we see that $D_k(\eta)$ can be written in a recurrence fashion as 
\begin{align}\label{eq:recursion-D}
   D_{k}(\eta) = \bigg( 1 -  \frac{\eta_k (2-\eta_k)}{2} \gamma\bigg) D_{k-1}(\eta) + \frac{\eta_k^2}{2}.
\end{align}
From~\eqref{eq:recursion-D} and~\eqref{eq:des-l-EE-D} we see that we would like to make $D_{k}(\eta)$ as small as possible by choosing appropriate values for the $\eta_{j}$.
From the definition of $D_k(\eta)$, it follows that $D_k$ only depends on $\eta_j$ for $j \leq k$ and hence, we set the partial derivative  $\frac{\partial}{\partial \eta_k} D_{k}(\eta)$ of $D_{k}(\eta)$ equal to zero and solve for $\eta_k$ and get 
\begin{align*}
  0 & = \frac{\partial}{\partial \eta_k} D_{k}(\eta)  \stackrel{~\eqref{eq:recursion-D}}{=} \frac{\partial}{\partial \eta_k} \bigg( \bigg( 1 -  \frac{\eta_k (2-\eta_k)}{2} \gamma\bigg) D_{k-1}(\eta) + \frac{\eta_k^2}{2}  \bigg)\\
  & = (-1+\eta_{k})\gamma D_{k-1}(\eta) + \eta_{k}
\end{align*} so that 
\begin{align}\label{eq:etak-betak}
    \eta_k = \frac{\gamma D_{k-1}(\eta)}{\gamma D_{k-1}(\eta) + 1} = \frac{\gamma \beta_k}{\gamma \beta_k + 1}
\end{align}
with $\beta_k \eqdef D_{k-1}(\eta)$ and $\beta_0 \eqdef \frac{\norm{\mathbf{A}}_{\square}^{2}\, D_f^{x_{0}^*}(x_{0},\hat x)}{\sigma^2}$. Moreover, using~\eqref{eq:etak-betak} we get
\begin{align}\label{eq:etak-betak-aux}
    -  \frac{\eta_k (2-\eta_k)}{2} \gamma \beta_k + \frac{\eta_k^2}{2} =  \frac{-\gamma\beta_k\eta_k}{2}.
\end{align}
Using~\eqref{eq:etak-betak-aux} in the recursion~\eqref{eq:recursion-D} expressed with $\beta_{k}$ we get 
\begin{align}\label{eq:recursion-betak}
    \beta_{k+1} = D_{k}(\eta)= \bigg( 1 -  \frac{\eta_k (2-\eta_k)}{2} \gamma\bigg) \beta_k + \frac{\eta_k^2}{2} = \beta_k(1 - \frac{\gamma \eta_k}{2}).
\end{align}
With this derivation we get from Lemma~\ref{lem:descent-D} by lower bounding the Bregman distance with the norm (recall~\eqref{eq:D}):
\begin{corollary}\label{cor:decay-norm}
  Assume that in the context of Lemma~\ref{lem:descent-D} we choose $\eta_{k}$ recursively by~\eqref{eq:etak-betak} and have $\beta_{k}$ defined by $\beta_{0} \eqdef \norm{\mathbf{A}}_{\square}^{2}\,D_{f}^{x_0^{*}}(x_0,\hat{x})/\sigma^{2}$ and the recursion~\eqref{eq:recursion-betak}. Then it holds that 
\begin{align*}
\EE{\norm{x_{k}-\hat{x}}_2^{2}} \leq  2\frac{\sigma^2}{\norm{\mathbf{A}}_{\square}^{2}}\beta_{k}.
\end{align*}
\end{corollary}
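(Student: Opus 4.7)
The plan is to simply chain together the ingredients that have already been assembled in the derivation leading up to the corollary. The corollary is a cosmetic restatement: passing from a bound on the Bregman distance to a bound on the squared Euclidean norm, and re-expressing the constant $D_k(\eta)$ using the auxiliary sequence $\beta_k$.

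First, I would note that with the choice of stepsize $\eta_k$ given by~\eqref{eq:etak-betak} and the resulting recursion~\eqref{eq:recursion-betak} for $\beta_k$, the identity $\beta_{k+1} = D_k(\eta)$ holds by construction (this is just how $\beta_k$ was defined in the derivation). In particular, shifting the index, $\beta_k = D_{k-1}(\eta)$ for all $k \geq 1$, while for $k=0$ the bound reduces to $\EE{\|x_0 - \hat x\|_2^2} \leq 2 D_f^{x_0^*}(x_0,\hat x) = 2 \beta_0 \sigma^2/\norm{\mathbf{A}}_{\square}^2$ by the very definition $\beta_0 = \norm{\mathbf{A}}_{\square}^2 D_f^{x_0^*}(x_0,\hat x)/\sigma^2$ together with~\eqref{eq:D}.

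Next, for $k \geq 1$, I would invoke the recursive unrolling~\eqref{eq:des-l-EE-D} of Lemma~\ref{lem:descent-D}, which gives
\begin{equation*}
  \EE{D_f^{x_k^*}(x_k,\hat x)} \;\leq\; D_{k-1}(\eta)\,\frac{\sigma^2}{\norm{\mathbf{A}}_{\square}^2} \;=\; \beta_k\,\frac{\sigma^2}{\norm{\mathbf{A}}_{\square}^2}.
\end{equation*}
Finally, since $f$ is $1$-strongly convex, inequality~\eqref{eq:D} yields $\tfrac{1}{2}\|x_k - \hat x\|_2^2 \leq D_f^{x_k^*}(x_k,\hat x)$. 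Taking expectations and combining with the previous display gives the stated bound $\EE{\|x_k - \hat x\|_2^2} \leq 2 \beta_k \sigma^2/\norm{\mathbf{A}}_{\square}^2$.

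There is essentially no obstacle here, since all the heavy lifting has already been done in Lemma~\ref{lem:descent-D} and the derivation of the optimal stepsize. The only thing to be slightly careful about is bookkeeping: making sure the index shift between $\beta_{k+1} = D_k(\eta)$ and the bound on $\EE{D_f^{x_k^*}(x_k,\hat x)}$ in terms of $D_{k-1}(\eta)$ lines up, and handling the base case $k=0$ separately (which is immediate from the definition of $\beta_0$). No new estimate needs to be proved.
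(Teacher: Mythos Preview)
Your proposal is correct and follows exactly the approach the paper takes: combine the unrolled estimate~\eqref{eq:des-l-EE-D} with the identification $\beta_k = D_{k-1}(\eta)$ and then lower bound the Bregman distance by the squared norm via~\eqref{eq:D}. You are simply more explicit than the paper about the index shift and the base case $k=0$, which the paper leaves implicit.
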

We see that the decay of the quantity $\beta_{k}$ governs the convergence speed of Algorithm~\ref{alg:BK} towards the noise-free solution $\hat{x}$ and hence, we would like to quantify its decay.

We have 
\begin{align*}
  \frac{\gamma\beta_{k+1}-\gamma\beta_k}{\gamma} & = \beta_{k+1}-\beta_{k} \stackrel{\eqref{eq:recursion-betak}}{=} - \frac{\gamma\beta_{k}}2 \eta_{k} \stackrel{\eqref{eq:etak-betak}}{=} -\frac{\gamma\beta_{k}}{2(\gamma\beta_{k}+1)}\gamma\beta_{k} = -\frac{(\gamma\beta_k)^{2}}{2(\gamma\beta_{k}+1)}.
\end{align*}
Setting $v_{k}\eqdef \gamma\beta_{k}$ we get the recursion 
\begin{align}\label{eq:recursion-v}
\frac{v_{k+1}-v_{k}}{\gamma} = -\frac{1}2 \frac{v_{k}^{2}}{v_{k}+1}.
\end{align}

\
\begin{remark}\label{rem:estimate-decay}
  To get the asymptotic decay of $v_{k}$ we make the quick observation that since $v_{k}$ is positive and decreasing, we also have $v_{k+1}-v_{k} \leq -\tfrac{\gamma}{2(v_{0}+1)}v_{k}^{2}$ and from this one can deduce that
  \begin{align}\label{eq:estimated-rec}
    v_{k} \leq \left(\tfrac1{v_{0}} + \tfrac{\gamma}{2(v_{0}+1)}k\right)^{-1} =  \mathcal{O}(k^{-1})
  \end{align}
  and especially we see that $\beta_{k} = \mathcal{O}(k^{-1})$ and by Corollary~\ref{cor:decay-norm} that $\norm{x_{k}-\hat{x}} = \mathcal{O}(k^{-1/2})$. 
\end{remark}
This quick estimate is not good for small $k$ and we would like to get a sharper bound in this regime. As observed in\cite[Section 2.5]{marshall2023optimal} the recursion~\eqref{eq:recursion-v} is a forward-Euler approximation to the ordinary differential equation $\dot{u} = - u^{2}/(2u+2)$ which has the solution $u(t) = 1/W(e^{ t/2+c})$ with the Lambert-W function $W$ and $c$ coming from the initial condition $u(0) = u_{0}$, namely $c = \tfrac1{u_{0}} - \ln(u_{0})$. Moreover, it can be shown that $u$ is convex and decreasing and hence, the forward-Euler method is a positive lower bound for the solution. In conclusion, this shows that 
\begin{align}\label{eq:lambert-W}
v_{k} \leq \frac{1}{W(e^{\tfrac\gamma2 k + c})},\quad c = \tfrac{1}{v_{0}} - \ln(v_{0}).
\end{align}
In the variable $\beta_{k}$ this means that 
\begin{align}\label{eq:lambert-W-beta}
\beta_{k} \leq \frac{1}{\gamma 
 W(e^{\tfrac\gamma2 k + c})},\quad c = \tfrac{1}{\gamma\beta_{0}} - \ln(\gamma\beta_{0}).
\end{align}

\begin{remark}
  The original recursion from~\eqref{eq:recursion-v} decays much faster than~\eqref{eq:estimated-rec} initially if $v_{0}$ is large (cf. Figure~\ref{fig:error_estimates}). However, the $v_{k}$ which solves the recursion~\eqref{eq:recursion-v} also decay like $\mathcal{O}(1/k)$ asymptotically. The upper estimate from~\eqref{eq:lambert-W} is usually very close to~\eqref{eq:recursion-v} in this case and they could not be distinguished in Figure~\ref{fig:error_estimates}.
\end{remark}
\begin{figure}[htb]
  \centering
  \begin{tikzpicture}
    \begin{axis}[
      xlabel=$k$, 
      grid, 
      xmin = 0,
      xmax = 5200,
      ymin = 0,
      ymax = 110,
      axis lines=middle, 
      axis line style={-latex}, 
      width=8cm, 
      height=5cm, 
      x label style={at={(axis description cs:1.0,0)},anchor=west},
      ]
      \addplot[line width=1.5pt,color=olive] table [x=k, y=v, col sep=comma] {error_estimates_v_w_u.dat};
      \addlegendentry{\scriptsize original recursion~\eqref{eq:recursion-v}} 
      \addplot[line width=1.5pt,color=teal] table [x=k, y=w, col sep=comma] {error_estimates_v_w_u.dat};
      \addlegendentry{\scriptsize estimated from \eqref{eq:estimated-rec}} 
    \end{axis}
  \end{tikzpicture}
  \caption{Comparison of the original decay and the estimate from Remark~\ref{rem:estimate-decay}.}
  \label{fig:error_estimates}
\end{figure}
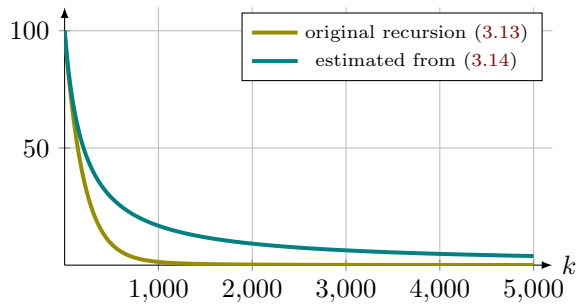
Next, we give some comments on the hyperparameters of our method.
\begin{remark}
   It holds $\eta_0 =  \frac{\gamma\beta_0}{\gamma\beta_0 +1} = \frac{\gamma \norm{\mathbf{A}}_{\square}^{2} D_f^{x_{0}^*}(x_{0},\hat x)}{\gamma \norm{\mathbf{A}}_{\square}^{2} D_f^{x_{0}^*}(x_{0},\hat x) + \sigma^2} \approx 1 $, since at the beginning of the method we have $\gamma \norm{\mathbf{A}}_{\square}^{2} D_f^{x_{0}^*}(x_{0},\hat x) \gg \sigma^2$. The method initially starts with a stepsize close to 1, so enjoys a linear rate up to some iteration and only decreases the learning rate once the error is smaller than the noise. Furthermore, $\eta_k \in [0, 1]$ is a non-increasing sequence.
\end{remark}
We introduce the function 
\begin{align}
  \label{eq:gk1}
  g(k) = \frac{\sigma^{2}}{\gamma W(ce^{\tfrac\gamma2 k})},\quad c = \frac{e^{\tfrac1{\gamma\beta_{0}}}}{\gamma\beta_{0}}
\end{align}
and note that from equation~\eqref{eq:lambert-W-beta} and Corollary~\ref{cor:decay-norm} we get that 
\begin{align*}
\EE{\norm{x_{k}-\hat{x}}_2^{2}} \leq \frac{2}{\norm{\mathbf{A}}_{\square}^{2}}g(k).
\end{align*}
The following corollaries provide more
intuition about the error bound function $g(k)$ and the two asymptotic regimes of the stepsize. First, we consider what happens when the noise vanishes.
\begin{lemma}
\begin{enumerate}
    \item (limit as $\sigma \to 0$). In the case where the variance of the noise $\sigma^2$ is small and the other parameters are fixed, we have
\begin{equation*}
    g(k) = \norm{\mathbf{A}}_{\square}^{2}e^{-\gamma k/2} \cdot D_f^{x_{0}^*}(x_{0},\hat x) + \mathcal{O}(\frac{\sigma^2}{ D_f^{x_{0}^*}(x_{0},\hat x)^2}) \quad \text{as} \quad \sigma \rightarrow 0
\end{equation*}
where the constant in the big-$\mathcal{O}$ notation depends on $\gamma$ and $k$. In particular, we have
\begin{equation*}
    \eta(k) \to 1 \quad \text{as} \quad \sigma \to 0.
\end{equation*}

\item (limit as $k \to \infty$). We have 
\begin{equation*}
  g(k) = \frac{2\sigma^2}{\gamma^2 k} \bigg(1 + \mathcal{O}(\frac{\text{ln}(k)}{k})\bigg), \quad \eta_k \approx \frac{2}{2 + \gamma k} \quad \text{as} \quad k \to + \infty
\end{equation*}
where the constant in the big-$\mathcal{O}$ notation depends on $\gamma$, $D_f^{x_{0}^*}(x_{0},\hat x)$ and $\sigma^2$. So that from Corollary~\ref{cor:decay-norm} we should expect
\begin{align*}
\EE{\norm{x_{k}-\hat{x}}_2} \leq \frac{2}{\gamma \norm{\mathbf{A}}_{\square}} \frac{\sigma}{\sqrt{k}}.
\end{align*}
\end{enumerate}
\end{lemma}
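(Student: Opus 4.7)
Both statements are derived from a single closed-form expression, namely the upper bound
\begin{align*}
g(k) = \frac{\sigma^{2}}{\gamma\,W(c\,e^{\gamma k/2})},\qquad c = \frac{e^{1/(\gamma\beta_0)}}{\gamma\beta_0},\qquad \beta_0 = \frac{\norm{\mathbf{A}}_{\square}^{2}\,D_f^{x_0^*}(x_0,\hat x)}{\sigma^2},
\end{align*}
together with the elementary identities $\eta_k = v_k/(v_k+1)$ and $v_k = \gamma\beta_k$. The plan is to expand the Lambert--$W$ function in the two relevant asymptotic regimes (small argument for part~1, large argument for part~2) and then translate the results back through these identities. The only technical content is a careful bookkeeping of remainder terms.

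\textbf{Part 1 ($\sigma\to 0$, with $\gamma$, $k$, and $D := D_f^{x_0^*}(x_0,\hat x)$ fixed).} First observe that $\beta_0 = \norm{\mathbf{A}}_{\square}^{2}D/\sigma^{2}\to\infty$, so that $c = e^{1/(\gamma\beta_0)}/(\gamma\beta_0) \to 0$ and, for fixed $k$, also $ce^{\gamma k/2}\to 0$. I would then use the standard expansion $W(x) = x - x^{2} + O(x^{3})$ near $0$, which gives $1/W(x) = 1/x + 1 + O(x)$, and $e^{1/(\gamma\beta_0)} = 1 + O(1/\beta_0)$. Substituting these into $g(k)$ yields
\begin{align*}
g(k) \;=\; \sigma^{2}\beta_0 e^{-\gamma k/2}\bigl(1 + O(1/\beta_0)\bigr) \;=\; \norm{\mathbf{A}}_{\square}^{2}\,D\,e^{-\gamma k/2} + O\!\left(\tfrac{\sigma^{2}}{D}\cdot\tfrac{1}{D}\right),
\end{align*}
where the constants absorbed into the $O$-symbol depend only on $\gamma$ and $k$. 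The claim $\eta(k)\to 1$ then follows from $\eta_k = v_k/(v_k+1)$ by noting that $v_k$ diverges in this limit: indeed $v_0 = \gamma\beta_0\to\infty$, and the recursion $v_{k+1}=v_k(1-\gamma\eta_k/2)$ gives $v_k \geq v_0(1-\gamma/2)^{k}\to\infty$ for each fixed $k$, so $\eta_k\to 1$.

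\textbf{Part 2 ($k\to\infty$, with $\sigma$, $\gamma$, $\beta_0$ fixed).} Here $ce^{\gamma k/2}\to\infty$ and I would invoke the large-argument expansion $W(y) = \ln y - \ln\ln y + O(\ln\ln y/\ln y)$. Applied with $y = ce^{\gamma k/2}$ one gets $\ln y = \gamma k/2 + \ln c$ and $\ln\ln y = \ln(\gamma k/2) + O(1/k)$, hence
\begin{align*}
W(ce^{\gamma k/2}) \;=\; \tfrac{\gamma k}{2}\Bigl(1 + O\!\bigl(\tfrac{\ln k}{k}\bigr)\Bigr).
\end{align*}
Dividing $\sigma^{2}/\gamma$ by this quantity and using $1/(1+x) = 1 + O(x)$ for the outer factor produces the claimed expansion $g(k) = \tfrac{2\sigma^{2}}{\gamma^{2}k}(1 + O(\ln k/k))$. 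The asymptotic form of $\eta_k$ then follows by the same manipulation: from the upper bound on $v_k = \gamma\beta_k$ one obtains $v_k \sim 2/(\gamma k)$, and substituting into $\eta_k = v_k/(v_k+1)$ gives $\eta_k \approx 2/(2+\gamma k)$. The final bound on $\EE{\norm{x_k-\hat x}_2}$ is just Corollary~\ref{cor:decay-norm} combined with $\sqrt{1+x}\leq 1+x/2$.

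\textbf{Main obstacle.} The computations themselves are mechanical; the only delicate point is making the asymptotic expansion of $W$ uniform enough that the $O$-constants can be tracked explicitly in terms of $\gamma$ and $k$ (for part 1) or of $\gamma$, $\beta_0$ and $\sigma$ (for part 2). Since the Lambert-$W$ function is real-analytic on $(0,\infty)$ and its small/large argument expansions are classical with explicit remainder bounds, this is a question of quoting and applying known estimates, not of proving anything new about $W$.
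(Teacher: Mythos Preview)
Your proposal is correct and follows essentially the same route as the paper: both arguments reduce to expanding the Lambert--$W$ function at the origin for part~1 and at infinity for part~2, then reading off the behaviour of $g(k)$ and $\eta_k$ from the identities you quote. Two minor differences are worth noting. For $\eta(k)\to 1$ in part~1 the paper uses the closed form $\eta_k = \gamma g(k)/(\gamma g(k)+\sigma^2)$ and simply lets $\sigma\to 0$, whereas you argue via the discrete recursion $v_{k+1}=v_k(1-\gamma\eta_k/2)$; both work, and the paper's version is marginally quicker since $g(k)$ has already been identified. For part~2 the paper does not carry out the large-argument expansion at all but defers to \cite[Corollary 1.12]{marshall2023optimal}; your explicit use of $W(y)=\ln y-\ln\ln y+O(\ln\ln y/\ln y)$ makes the argument self-contained and is, if anything, more informative. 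The final bound on $\EE{\norm{x_k-\hat x}_2}$ is stated only heuristically in the paper (``we should expect''), and the clean way to get it is Jensen's inequality on $\EE{\norm{x_k-\hat x}_2^2}$ rather than the $\sqrt{1+x}$ estimate you mention.
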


\begin{proof}
  The first order Taylor expansion of series of the Lambert-W function is $W(x) = x +  \mathcal{O}(x^2)$ (as $x \rightarrow 0$).
  Using $\beta_{0} = \norm{\mathbf{A}}_{\square}^{2} D_{f}^{x_{0}^{*}}(x_{0},\hat{x})/\sigma^{2}$ this gives
\begin{align*}
    g(k) &= \frac{\sigma^2}{\gamma} \frac{1}{\frac{\sigma^2}{\gamma \cdot  \norm{\mathbf{A}}_{\square}^{2}D_f^{x_{0}^*}(x_{0},\hat x)} e^{\gamma k/2} e^{\frac{\sigma^2}{\gamma \cdot \norm{\mathbf{A}}_{\square}^{2} D_f^{x_{0}^*}(x_{0},\hat x)}} + \mathcal{O}(\frac{\sigma^4}{ D_f^{x_{0}^*}(x_{0},\hat x)^2})} \\
    &=  \frac{\norm{\mathbf{A}}_{\square}^{2}D_f^{x_{0}^*}(x_{0},\hat x)e^{-\gamma k/2} }{ e^{\frac{\sigma^2}{\gamma \cdot  \norm{\mathbf{A}}_{\square}^{2}D_f^{x_{0}^*}(x_{0},\hat x)}} + \mathcal{O}(\frac{\sigma^4}{ D_f^{x_{0}^*}(x_{0},\hat x)^2})}
\end{align*}
Canceling terms and using the fact that $e^{\frac{\sigma^2}{\gamma \cdot D_f^{x_{0}^*}(x_{0},\hat x)} } = 1 + \mathcal{O}(\frac{\sigma^2}{ D_f^{x_{0}^*}(x_{0},\hat x)} )$ give
\begin{equation*}
    g(k) = \norm{\mathbf{A}}_{\square}^{2}e^{-\gamma k/2} D_f^{x_{0}^*}(x_{0},\hat x) + \mathcal{O}(\frac{\sigma^2}{ D_f^{x_{0}^*}(x_{0},\hat x)^2}) \quad \text{as} \quad \sigma \rightarrow 0
  \end{equation*}  
  Finally, from~\eqref{eq:etak-betak} we have
  \begin{align}\label{eq:cont-etak-betak}
    \eta_k = \frac{\gamma \beta_k}{\gamma \beta_k + 1} =  \frac{\gamma g(k)/\sigma^2}{\gamma g(k)/\sigma^2 + 1} = \frac{\gamma g(k)}{\gamma g(k) + \sigma^2}
\end{align}
and get $\eta(k) \to 1$ for $\sigma \to 0$.

In the standard Bregman-Kaczmarz algorithm with $\eta_{k}=1$ on obtains linear convergence with contraction factor $(1-\gamma/2)^{k}$. Here we get $e^{-\gamma k/2}$ which is just slightly worse.

Now we have a look at the case $k\to\infty$. Similarly to~\cite[Corollary 1.12]{marshall2023optimal} one can obtain that 
\begin{equation*}
  g(k) = \frac{2\sigma^2}{\gamma^2 k} \bigg(1 + \mathcal{O}(\frac{\text{ln}(k)}{k})\bigg) \quad \text{as} \quad k \to + \infty
\end{equation*}
and
\begin{equation}\label{eq:eta_limit}
  \eta(k) =  \frac{\gamma g(k)/\sigma^2}{\gamma g(k)/\sigma^2 + 1} \approx \frac{2}{2 + \gamma k} \approx \mathcal{O}(1/k) \quad \text{as} \quad k \to + \infty.
\end{equation}
\end{proof}

\section{Heuristic estimation of the hyperparameters}
\label{sec:heuristic}
In the previous section, we showed that Algorithm~\ref{alg:BK} with stepsize $\eta_{k}$ according to~\eqref{eq:etak-betak} produces iterates that indeed converge to the true noise-free solution $\hat{x}$ without ever using the clean right-hand side $b$, but only using noisy data. However, the algorithm needs to know explicitly the hyperparameters $\gamma$ and $\beta_{0} = \norm{\mathbf{A}}_{\square}^{2}\frac{D(x_0, \hat x)}{\sigma^2}$ in order to calculate the stepsizes. Setting the algorithm with incorrect parameters may result in a slower algorithm or can even destroy convergence towards the true solution.

In this part, inspired by~\cite{marshall2023optimal}, we introduce some heuristics estimations of those parameters in order to obtain good results in practice. We show how to estimate them from one additional run of the Bregman-Kaczmarz method with stepsize $\eta_k=1$. Let $x_0, x_1, \dots, x_N$ denote the iterates resulting from the additional run of the Bregman-Kaczmarz method with stepsize $\eta_k=1$ for $N$ iterations. We get from~\eqref{eq:des_l_EE} with $\eta_k=1$ :
\begin{align*}
    \mathbb{E}[D_f^{x_{k+1}^*}(x_{k+1},\hat x)] \leq \bigg( 1 -  \frac{\gamma}{2}\bigg) \mathbb{E}[D_f^{x_{k}^*}(x_{k},\hat x)] + \frac{\sigma^2}{2\norm{\mathbf{A}}_{\square}^{2}},
\end{align*}
and inductively we infer that
\begin{equation}
\label{eq:cvr}
    \mathbb{E}[D_f^{x_{k}^*}(x_{k},\hat x)] \leq  \bigg( 1 -  \frac{\gamma}{2}\bigg)^k \cdot D_f^{x_{0}^*}(x_{0},\hat x) + \frac{\sigma^2}{\gamma \norm{\mathbf{A}}_{\square}^{2}}.
\end{equation}
If we assume that the initial error is above the noise level and assume that eventually, the error stagnates because of the noise, then we expect that,
initially, $D_f^{x_{j+1}^*}(x_{j+1},\hat x) \approx (1 - \tfrac{\gamma}{2})D_f^{x_{j}^*}(x_{j},\hat x)$ and eventually $D_f^{x_{j}^*}(x_{j},\hat x) \approx \tfrac{\sigma^2}{\gamma \norm{\mathbf{A}}_{\square}^{2}}$ (see~\eqref{eq:cvr}). Using these estimates and the approximation $x_N \approx \hat x $ yields 
heuristic estimates for $\gamma$ and $\beta_0 = \tfrac{\norm{\mathbf{A}}_{\square}^{2}D(x_0, \hat x)}{\sigma^2}$ as follows:  We choose an index $N_{0}< N$ for which we assume that for $1\leq j\leq N_{0}$ it holds that $D_f^{x_{j+1}^*}(x_{j+1},\hat x) \approx (1 - \tfrac{\gamma}{2})D_f^{x_{j}^*}(x_{j},\hat x)$, i.e. we have $\tfrac\gamma2 \approx 1 - D_f^{x_{j+1}^*}(x_{j+1},x_{N})/D_f^{x_{j}^*}(x_{j},x_{N})$. We replace the quotient of the Bregman distances by their empirical mean over the $j$ and get the estimate
\begin{equation}\label{eq:estimate-gamma}
    \Tilde{\gamma} \eqdef 2 \left(1 - \frac{1}{N_0} \sum_{j=1}^{N_0} \frac{D_f^{x_{j}^*}(x_{j},x_N)}{D_f^{x_{j-1}^*}(x_{j-1},x_N)}\right).
\end{equation}
For $\beta_{0} = \tfrac{\norm{\mathbf{A}}_{\square}^{2} D_{f}^{x_0^{*}}(x_{0},\hat{x})}{\sigma^{2}}$ we choose an $N_{1}<N$ for which we assume that for $N_{1}\leq j<N$ it holds that $D_{f}^{x_{j}^{*}}(x_{j},\hat{x}) \approx \tfrac{\sigma^{2}}{\gamma \norm{\mathbf{A}}_{\square}^{2}}$, i.e. $\tfrac{\norm{\mathbf{A}}_{\square}^{2}}{\sigma^{2}} \approx \tfrac{1}{\gamma D_{f}^{x_j^{*}}(x_j,\hat{x})}$. We replace $\gamma$ by its estimate $\tilde\gamma$ and the Bregman distance by its empirical mean and get the estimate
\begin{align}\label{eq:estimate-beta0}
  \tilde\beta_{0} \eqdef \frac{D_{f}^{x_0^{*}}(x_{0},x_N)}{ \tfrac{\tilde \gamma}{N_{1}}\sum_{j=N-N_{1}}^{N-1} D_{f}^{x_j^{*}}(x_{j},x_{N})} = \left[\frac{\tilde \gamma}{N_1} \sum_{j=N-N_1}^{N-1} \frac{D_f^{x_{j}^*}(x_{j},x_N)}{D_f^{x_0^{*}}(x_0, x_N)} \right]^{-1}.
\end{align}

\section{Numerical experiments}
\label{sec:numerical_experiment}

We present several experiments to demonstrate the effectiveness of Algorithm \ref{alg:BK} under various conditions. The simulations were performed in \texttt{Python} on an Intel Core i7 computer with 16GB RAM. The code that produces the figures in this paper is available at \url{https://github.com/tondji/abk}.

\subsection{Synthetic experiments}
\label{sec:synthetic-experiments}

In this part, we want to illustrate the result of Corollary~\ref{cor:decay-norm}. For all experiments we consider $f(x) = \lambda \cdot \|x\|_1 + \frac{1}{2}\|x\|^2_2$, where $\lambda$ is the sparsity parameter which gives us $\nabla f^*(x) = S_{\lambda}(x)$. Note that $f$ is $1$-strongly convex but not smooth.
Synthetic data for the experiments are generated as follows:
all elements of the data matrix $\mathbf{A} \in \RR^{m\times n}$ are chosen independently and identically distributed from the standard normal distribution $\mathcal{N}(0, 1)$. To construct the sparse solution $\hat x \in \RR^n$, we generate a random vector with $s$ non-zero entries from the standard normal distribution and we set it as $\hat x$ and the corresponding right-hand side is $b = \mathbf{A}\hat x \in \RR^m$. We fixed the number of blocks $M$, all having the same size, and the individual noise variances to $\sigma_{i} = \sigma/\sqrt{M}$. 

We compared the five following methods:
\begin{enumerate}
\item \textbf{adaptive RSK}: The adaptive randomized sparse Kaczmarz method, i.e. Algorithm~\ref{alg:BK} with stepsize $\eta_{k}$ defined by~\eqref{eq:etak-betak}, $\lambda=0.05$, an exact value for $\beta_{0}$ (using the knowledge of the ground truth solution) and $\gamma=0.1$ from a grid search over a small range of values.
\item \textbf{RSK}: The randomized sparse Kaczmarz method, i.e. Algorithm~\ref{alg:BK} with $\eta_{k}=1$ (hence, neither $\gamma$ nor $\beta_0$ is needed, especially no ground truth knowledge is needed) and $\lambda=0.05$.
\item \textbf{heuristic adaptive RSK}: the heuristic adaptive randomized sparse Kaczmarz method, i.e. Algorithm~\ref{alg:BK} with $\eta_{k}$ from~\eqref{eq:etak-betak} and $\lambda=0.05$. The parameters $\gamma$ and $\beta_{0}$ we estimated from the run of RSK as described in Section~\ref{sec:heuristic}. We used $N_{0} = 400$ in the estimate~\eqref{eq:estimate-gamma} of $\gamma$ and $N_{1} = 100$ in the estimate~\eqref{eq:estimate-beta0} for $\beta_{0}$.  
\item \textbf{adaptive RK}: The adaptive randomized Kaczmarz method, i.e. Algorithm~\ref{alg:BK} with $\eta_{k}$ defined by~\eqref{eq:etak-betak}, $\lambda=0$, an exact value for $\beta_{0}$ (using the knowledge of the ground truth solution) and $\gamma=0.05$ from a grid search over small range values.
\item \textbf{RK}: The randomized Kaczmarz method, i.e. Algorithm~\ref{alg:BK} with $\eta_{k}=1$ (hence, neither $\gamma$ nor $\beta_0$ is needed, especially no ground truth knowledge is needed) and $\lambda=0$.
\end{enumerate}
The adaptive methods can be seen as an optimal baseline for the proposed approach: We determined the hyperparameter $\beta_{0}$ exactly and also optimized the performance of $\gamma$ by grid search. On the other hand, the heuristic adaptive RSK does not use any knowledge about the solution or the noise and only relies on the previous run of RSK (again, with no additional parameters). The only parameters that have to be chosen for the heuristic adaptive RSK are the constants $N_{0}$ and $N_{1}$. 

For each experiment, we run independent trials each starting with the initial iterate $x_0=0$ with the following parameters $m=2000, n=100, \sigma=0.05, s=10$ and $M=200$, i.e. each block is of size $m_i=10$. The best parameter $\gamma$ for every method aRSK and aRK were obtained by grid search over the set $\gamma \in \{0.005, 0.01, 0.05, 0.1, 1, 2\}$. We measured performance by plotting the average relative residual error $\| \mathbf{A}x_k - b\|_2/\|b\|_2$ and the relative error $\|x_k - \hat x \|_2/\|\hat x\|_2$ against the number of iterations in Figure~\ref{fig:example1}.

The RK and RSK methods will be our baseline comparisons. We did a small grid search and selected the $\lambda$ that gives the smallest relative error for RSK and aRSK. With $\lambda$ fixed for both RSK and aRSK, We used the same procedure to select $\gamma$ for the aRSK and aRK($\lambda=0$). Having obtained the parameter for RSK, we can estimate $\gamma$ and $\beta_0$  with which we run haRSK.

Figure~\ref{fig:example1} shows in a semilog plot of the relative residuals and the relative reconstruction errors for all methods. Note that the usual RK and RSK methods reduce the error exponentially at the beginning and stagnate when the order of the total noise level is reached as predicted by Eq~\eqref{eq:cvr}, whereas aRK, aRSK and haRSK thanks to their adaptive stepsize are able to bring down the residuals and the errors to zero according to Corollary~\ref{cor:decay-norm}. The parameters of aRK and aRSK giving the best results are obtained through a small grid search. After choosing $\lambda$ and $\gamma$ for aRK and aRSK we used the same $\lambda$ for the haRSK method and estimated $\gamma$ and $\beta_0$ without knowing the true solution. In Figure~\ref{fig:example1} we see that haRSK has a better performance than the other adaptive methods. The reason behind this can be due to the fact that the best parameters were not part of the grid search space.

Figure~\ref{fig:example1-eta-beta} shows in a semilog plot how the parameters $\eta_{k}$ and $\beta_{k}$ evolve during the iteration for all the adaptive methods. The stepsize for the haRSK has a slow decay compared to other stepsizes. All the stepsizes are starting near 1 and decreasing towards zero. The second plot shows the behavior of $\beta_k$ that governs the reconstruction error for different adaptive methods.

\begin{figure}[htb]%
    \centering
    \includegraphics[width=0.47 \textwidth]{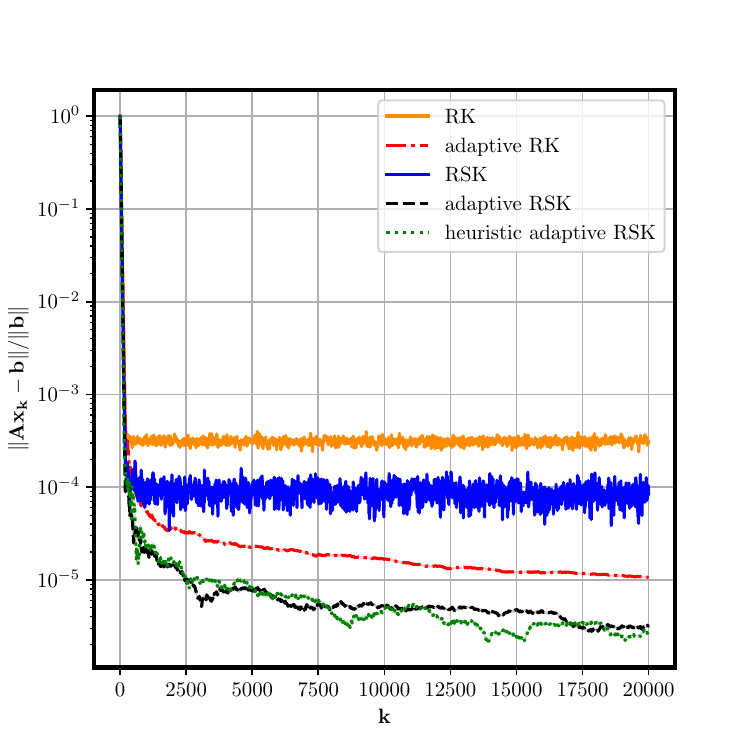}
    \qquad
    \includegraphics[width=0.47 \textwidth]{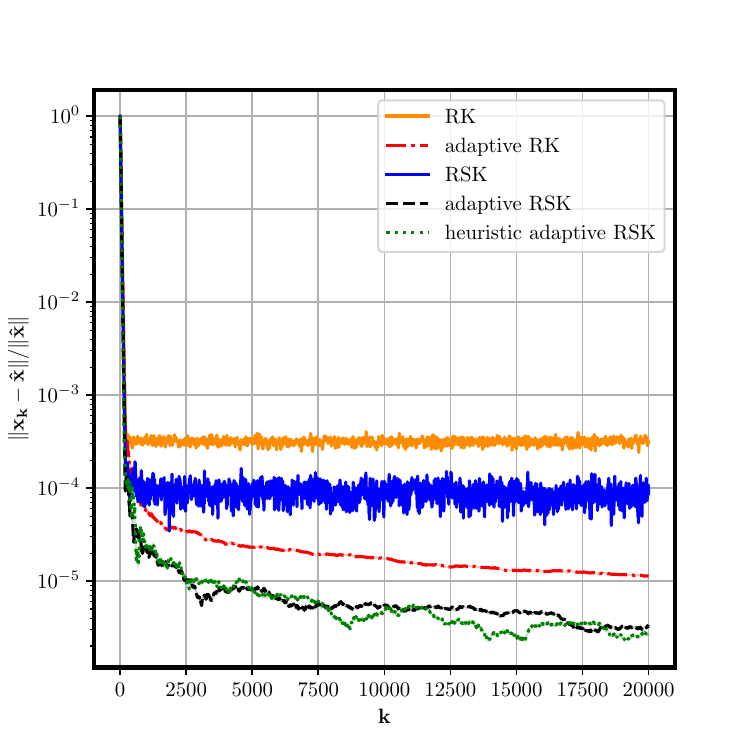}\vspace{-2ex}
    \caption{A comparison of the five methods as described in Section~\ref{sec:synthetic-experiments} for $m = 2000, n = 100,$ sparsity $s=10$, $\gamma_{aRK}=0.05, \gamma_{aRSK} = 0.1$, $\lambda=0.05$, $N_0=400, N_1=100$. From our heuristics we obtained $\Tilde{\gamma} = 0.0777$ and $\Tilde{\beta}_0 = 1022.60*10^{6}$. Left: Relative residual. Right: Relative error.}%
    \label{fig:example1}%
\end{figure}

\begin{figure}[htb]%
    \centering    
    \includegraphics[width=0.46 \textwidth]{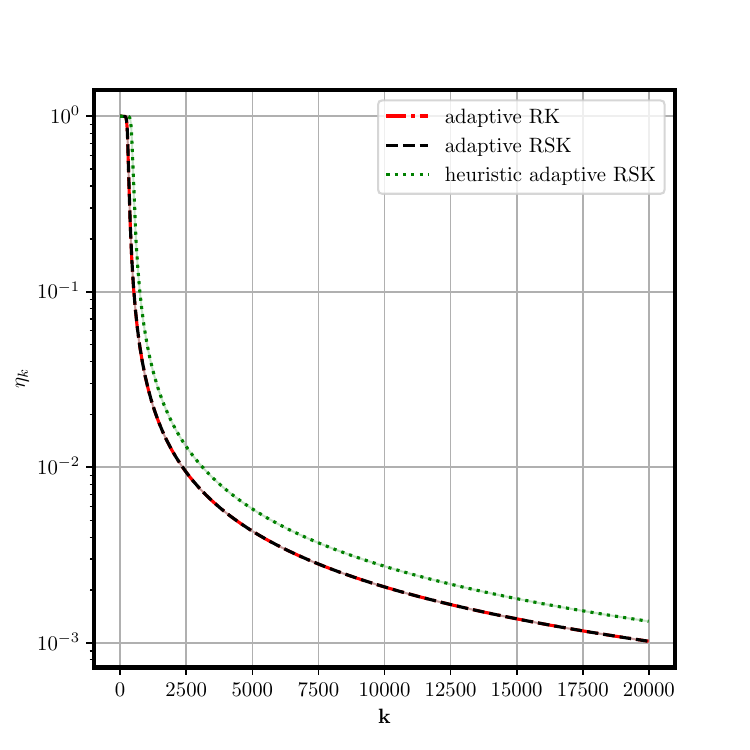}
    \qquad
    \includegraphics[width=0.46 \textwidth]{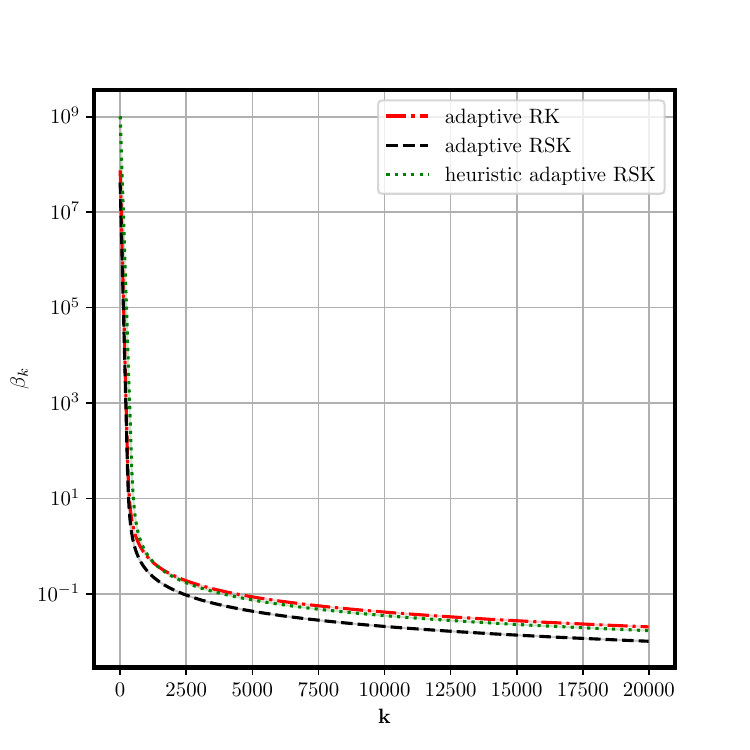}\vspace{-2ex}
  \caption{Evaluation of the stepsize $\eta_{k}$ (left) and the parameter $\beta_{k}$ (right) in the runs of the adaptive methods in Figure~\ref{fig:example1}.}
  \label{fig:example1-eta-beta}
\end{figure}

In another experiment, we investigated the influence of the number of blocks on convergence and recovery guarantee.
We generated an $m\times n$ matrix with $m=200$ rows and $n=100$ columns and a vector $\hat x\in\RR^{n}$ with $s=10$ non-zero entries. The right-hand side $b$ was always evaluated with independent noise with total noise level $\sigma$ such that the relative error was $10\%$. We investigated numbers of blocks $M\in \left\{ 200, 100, 20, 5, 1 \right\}$, each with equally sized blocks, i.e. block sizes of $1,2,10,40,200$. We chose $\lambda=1$. The hyperparameters $\beta_{0}$ and $\gamma$ depend on block size, and for a fair comparison we decided to use ``best possible parameters'', i.e. we always used the exact value of $\beta_{0}$ (using knowledge of ground truth $\hat x$) and determined the best $\gamma$ for each $M$ by a small grid search.
The results are shown in Figure~\ref{fig:example3} in a log-log plot. One sees that the residual and error go down well below the noise level and also decay further but very slowly in the end. Both error and residual go down fast in the beginning and do so faster for a larger number of blocks, i.e. for smaller blocks. This phenomenon could be observed for other settings as well but depends on an accurate choice of the parameter $\gamma$.

\begin{figure}[htb]%
    \centering
    \includegraphics[width=0.47 \textwidth]{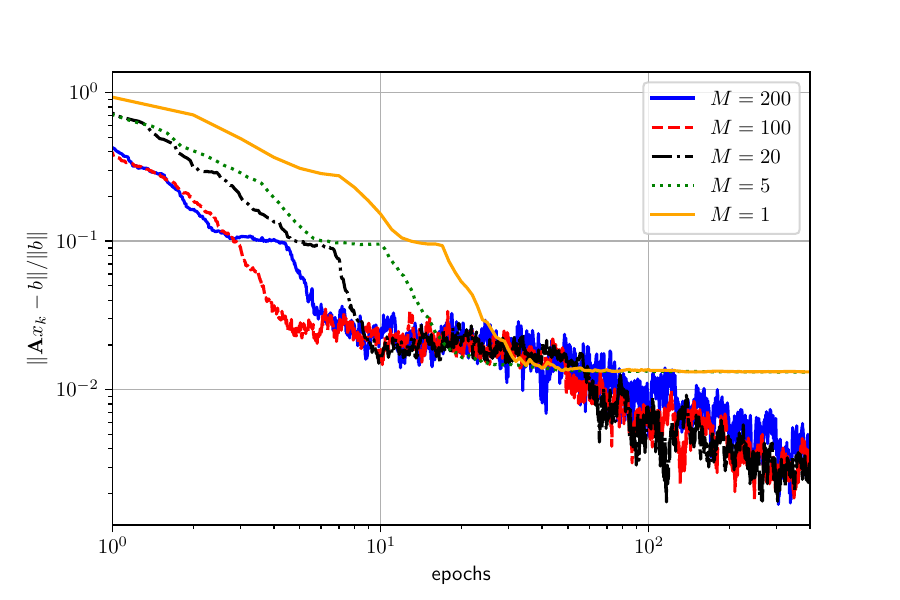}
    \qquad
    \includegraphics[width=0.47 \textwidth]{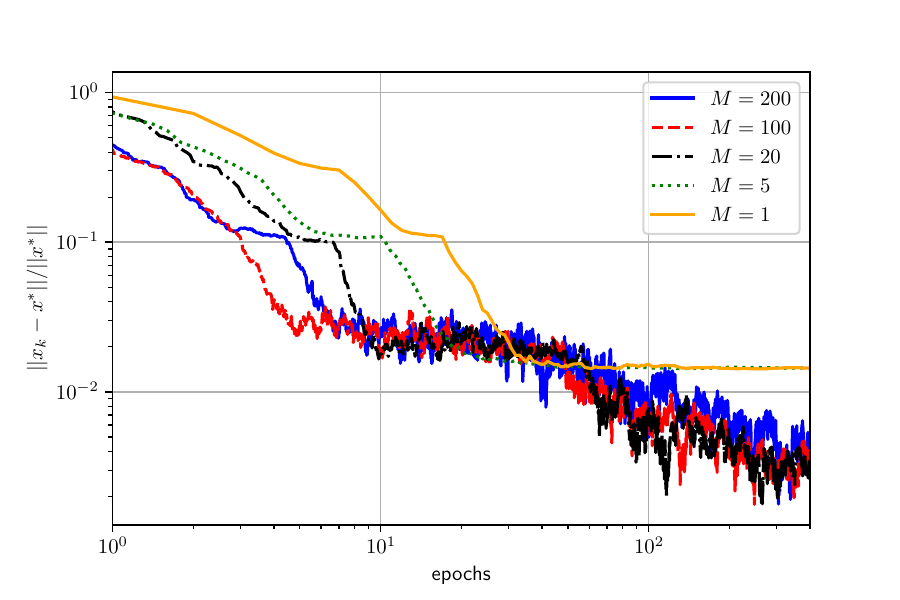}
    \caption{The effect of the block number $M$ on the relative error versus epochs for Algorithm \ref{alg:BK} (in the aRSK version) $m = 200, n = 100,$ sparsity $s=10$, $\gamma$ determined by grid search, $\beta_{0}$ exact. Left: Relative residual. Right: Relative error.}%
    \label{fig:example3}%
  \end{figure}

\subsection{Computerized tomography}
\label{sec:ct}

As an example on computerized tomography (CT), we used the implementation of the Radon transform from the \texttt{Python} library \texttt{skimage} and use it to build a system matrix for a parallel beam CT for a phantom of size $N\times N$ with $N=50$ and with $60$ equispaced angles. Hence, the system matrix $\mathbf{A}$ has size $3000\times 2500$, i.e. $m=3000$ and $n=2500$. We interpret the projection for each angle as one block $\mathbf{A}_{(i)}$, i.e. we have $M=60$ and each block has the size $m_{i}=50$. The ground truth solution $\hat x$ is fairly sparse (see below) and we generated the exact right-hand side simply as $b=\mathbf{A}x$. We used the total noise level of $10\%$, i.e. we choose $\sigma = 0.1\norm{b}$ and $\sigma_{i}=\sigma/\sqrt{M}$.

We used different methods for reconstruction, each run for $20$ epochs, i.e. for $60.000$ iterations and using the function $f(x)=\lambda\norm{x}_{1} + \tfrac12\norm{x}_2^2$:
\begin{enumerate}
\item \textbf{adaptive RSK}: The adaptive randomized sparse Kaczmarz method, i.e. Algorithm~\ref{alg:BK} with stepsize $\eta_{k}$ defined by~\eqref{eq:etak-betak}, $\lambda=30$, an educated guess for $\gamma$ and exact value for $\beta_{0}$ (using the knowledge of the ground truth solution).
\item \textbf{RSK}: The randomized sparse Kaczmarz method, i.e. Algorithm~\ref{alg:BK} with $\eta_{k}=1$ (hence, neither $\gamma$ nor $\beta_0$ is needed, especially no ground truth knowledge is needed) and $\lambda=30$.
\item \textbf{heuristic adaptive RSK}: the heuristic adaptive randomized sparse Kaczmarz method, i.e. Algorithm~\ref{alg:BK} with $\eta_{k}$ from~\eqref{eq:etak-betak} and $\lambda=30$. The parameters $\gamma$ and $\beta_{0}$ we estimated from the run of RSK as described in Section~\ref{sec:heuristic}. We used $N_{0} = 10.000$ in the estimate~\eqref{eq:estimate-gamma} of $\gamma$ and $N_{1} = 50.000$ in the estimate~\eqref{eq:estimate-beta0} for $\beta_{0}$. 
\item \textbf{adaptive RK}: The adaptive randomized Kaczmarz method, i.e. Algorithm~\ref{alg:BK} with $\eta_{k}$ defined by~\eqref{eq:etak-betak}, $\lambda=0$, an educated guess for $\gamma$ and exact value for $\beta_{0}$ (using the knowledge of the ground truth solution).
\end{enumerate}

In other words: The RSK method would be our baseline comparison. It can be applied by just choosing $\lambda$ by trial and error to adapt the sparsity of the outcome. With this, we can estimate the hyperparameter $\gamma$ and $\beta_{0}$ with which we run haRSK. This should produce a reconstruction which is much closer to the ground truth than RSK and even converge to the ground truth if the parameters were estimated exactly. The aRK and the aRSK method are just for further comparison.

All methods have been run for 20 epochs, i.e. each equation is used about 20 times and, consequently, each data point is queried about 20 times.

To show the relevance of the "independent noise" assumption, we conduct an additional experiment. Since the assumption of independent noise needs new measurements each time an equation is used, one could also try to do multiple measurements of the right hand side in advance and average the results to get improved measurement. To be precise, averaging $L$ independent measurement reduces the standard deviation of the noise by a factor of $1/\sqrt{L}$, i.e. we effectively reduce the noise level from 10\% to $\tfrac{10}{\sqrt{L}}\%$. 

\begin{enumerate}\setcounter{enumi}{4}
\item \textbf{RSK avg}: We sample $L$ noisy copies of $b$ with the same noise characteristic as other methods, we averaged those $L$ copies into one vector $b_{\text{avg}}$ and ran the standard RSK method with $b_{\text{avg}}$ and $\eta_k=1$ (hence, neither $\gamma$ nor $\beta_{0}$ is needed). We used $L=20$ to ensure that this methods uses the same number of measurement as the other methods. Moreover, we used $\lambda=30$ as in the other methods.
\end{enumerate}

In Figure~\ref{fig:ct-reconstructions} we show the ground truth data and the reconstructions by the adaptive RK, RSK, adaptive RSK and heuristic adaptive RSK. As can be seen, the sparse Kaczmarz methods all produce a clear background due to the sparsity enforcing $f$. The adaptive RSK and the heuristic adaptive RSK also lead to less noise in the non-zero regions. In order to evaluate the quality of the reconstructed image in Figure~\ref{fig:ct-reconstructions} ensuring that optimization processes result in outputs that are perceptually similar or close to the original data we report SSIM and PSNR, two commonly used metrics, in Table~\ref{tab:my-table}.

\begin{figure}[htb]
  \centering
  \includegraphics[width=\textwidth]{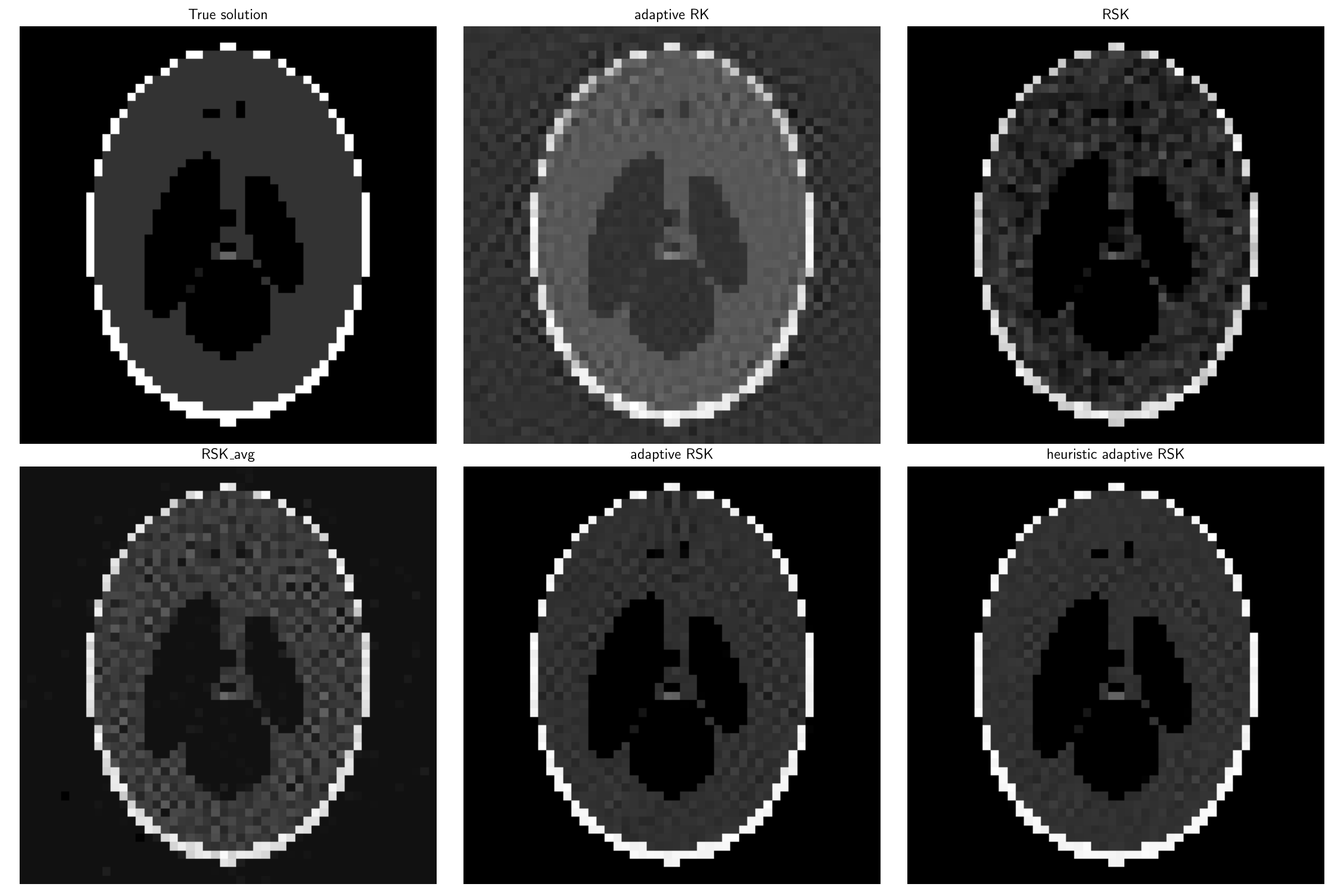}
  \caption{Ground truth solution and reconstruction by the different methods.}
  \label{fig:ct-reconstructions}
\end{figure}


\begin{table}[!ht]
\centering
\begin{tabular}{llllll}
\toprule
Methods & ARK & RSK & RSK avg & ARSK & hARSK \\\midrule
SSIM & 0.884 & 0.925 & 0.918 & \textbf{0.990} & \textbf{0.993} \\ 
PSNR & 27.302 & 28.240 & 28.119 & \textbf{38.97} & \textbf{40.407} \\ \bottomrule
\end{tabular}
\caption{Comparison of the SSIM and PSNR in Figure~\ref{fig:ct-reconstructions} for different methods: adaptive RK (ARK), RSK, RSK avg, adaptive RSK (ARSK) and heuristic adaptive RSK (hARSK).}
\label{tab:my-table}
\end{table}

SSIM~\cite{wang2004image} is used to measure how well the reconstructed output preserves the structural information of the original input. SSIM values range from $-1$ to $1$. A value of 1 indicates a perfect similarity, while a value of $-1$ indicates complete dissimilarity. PSNR is used to quantify the quality of the reconstructed image by comparing it to the original image. Higher PSNR values indicate better quality. As we can see from Table~\ref{tab:my-table}, our two methods ARSK and hARSK are the ones giving the best reconstruction of our CT problem.

Figure~\ref{fig:ct-residuals} shows the relative residuals and the relative reconstruction errors for all methods. It can be observed that $\text{RSK}_{\text{avg}}$ is able to decrease faster at the beginning due to low noise variance but both residual and the reconstruction quality settle for worse values than using the adaptive stepsize. While the RSK method brings the residual down to some value in the order of the total noise level, the adaptive methods are able to go way below that and it seems that they even go down further. This indicates that the choice of parameters is quite accurate. We stress that the heuristic adaptive RSK does not need any knowledge of the true solution. The only choice that the user has to make is $\lambda$ (to control the sparsity), and $N_{0}$ and $N_{1}$ for the estimation of $\gamma$ and $\beta_{0}$. We also see that the reconstruction error for adaptive RSK and heuristic adaptive RSK keeps decaying until the last epoch (and is expected to go down even further). We find it quite remarkable that this is possible even though the heuristic adaptive RSK method only sees corrupted data (with independent noise). As expected, the error in RSK and adaptive RK stagnates at some level (RSK because it is not adaptive and adaptive RK because it does not respect the sparsity). Table~\ref{tab:cpu_time} shows CPU times for different methods necessary to run for 20 epochs or to bring the relative error below a tolerance of $0.078$ for Figure~\ref{fig:ct-residuals}. We notice that ARSK and hARSK are the fastest methods to reach the tol meanwhile ARK and RSK are not able to reach the tolerance even after the total number of epochs.

\begin{figure}[htb]
  \centering
  \includegraphics[width=0.47\textwidth]{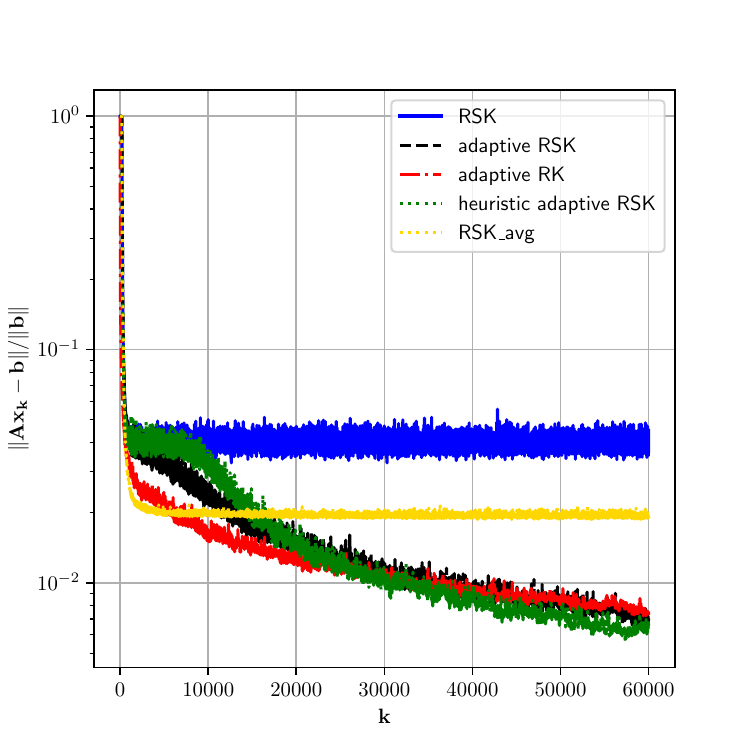}\qquad 
  \includegraphics[width=0.47\textwidth]{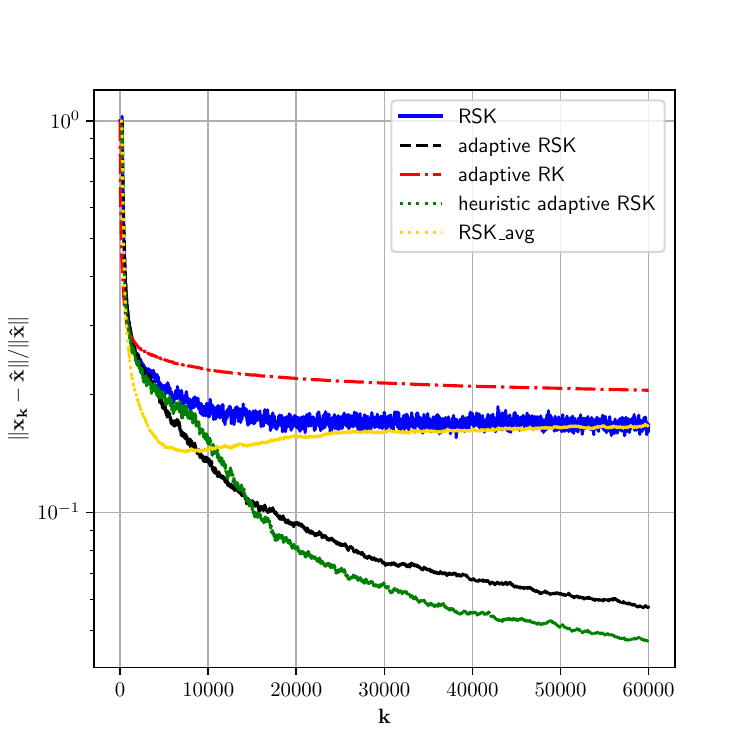}
  \caption{Left: Relative residuals for the CT reconstruction for the different methods. Right: Relative errors for the CT reconstruction.}
  \label{fig:ct-residuals}
\end{figure}

\begin{table}[htb]
\centering
\begin{tabular}{lr}
\toprule
 & CPU time (s) \\ \midrule
ARK             &  192.456 $^*$                         \\
RSK           &  264.749 $^*$                       \\
RSK avg           &  215.851 $^*$                       \\
ARSK           & \textbf{62.329}                         \\
hARSK          & \textbf{58.539}                         \\ \bottomrule
\end{tabular}
\caption{CPU time until a relative error falls below \texttt{tol}=0.078 for Figure~\ref{fig:ct-residuals}. Here $"*"$ indicates that the relative error was not achieved after the total number of iterations.}
\label{tab:cpu_time}
\end{table}

  To further investigate the influence of the number of measurements in Figure~\ref{fig:ct-residuals} for the RSK avg and adaptive RSK method, we conducted a further experiment: For different $L$ we generated $L$ measurements of the full right hand side. We run both methods for $L$ epochs, the rsk\_avg with the averaged measurements, and report the final relative residual and error for different values of $L$ in Table~\ref{tab:mult_measurements}. In this way, we used the same number of measurements and same number of iterations. We can see that the residuals of rsk\_avg get smaller for larger $L$ and are basically decaying like $1/\sqrt{L}$ as expected. The residuals of arsk decay similarly and always stay below the residuals of rsk\_avg. For the reconstruction error, the picture is not so clear, and the errors seem to stagnate. However, the reconstruction errors of arsk are consistently smaller than the ones for rsk\_avg.
  
\begin{table}[htb!]
  \centering
  \begin{tabular}{lrrrr}\toprule
    & \multicolumn{2}{c}{Residuals} & \multicolumn{2}{c}{Errors}\\
    $L$ & rsk\_avg & arsk & rsk\_avg & arsk\\\midrule
1  & 0.087  & 0.033\hphantom{8}  & 0.411  & 0.2\hphantom{70} \\
10 & 0.027  & 0.010\hphantom{8}  & 0.233  & 0.070 \\
20 & 0.019  & 0.0068 & 0.187  & 0.049 \\
30 & 0.014  & 0.006\hphantom{8}  & 0.135  & 0.047 \\
40 & 0.013  & 0.0049 & 0.131  & 0.041 \\
50 & 0.011  & 0.0041 & 0.129  & 0.034 \\
60 & 0.010  & 0.0039 & 0.116  & 0.031 \\
70 & 0.009  & 0.0038 & 0.103 & 0.041 \\
80 & 0.009  & 0.0032 & 0.097  & 0.034 \\   
90 & 0.008  & 0.0032 & 0.102 & 0.033 \\  
100& 0.007  & 0.0034 & 0.109  & 0.028 \\\bottomrule
  \end{tabular}
\caption{Final relative residuals and errors for both methods for different values of $L$ after $L$ epochs for the CT reconstruction. Here arsk stands for adaptive RSK and rsk\_avg stands for the standard randomized sparse Kaczmarz method where the $L$ measurements have been averaged.}
\label{tab:mult_measurements}
\end{table}

It follows from~\eqref{eq:cvr} that the iterates from Algorithm~\ref{alg:BK} with $\eta_k=1$ satisfy :
\begin{equation*}
    \mathbb{E}[\|x_{k} -\hat x\|^2_2] \leq  ( 1 -  q)^k \cdot 2f(\hat x) + \frac{\sigma^2}{q \|\mathbf{A}\|_{\square}^{2}}.
\end{equation*}
where $q = \frac{\gamma}{2}$ and $f(\hat x) = D_f^{x_{0}^*}(x_{0},\hat x)$. This means that the method will arrive in a ball of radius $\sigma/(\sqrt{q}\norm{A}_{\square})$ around $\hat{x}$. By picking $L$ noisy r.h.s and averaging them, we can reduce the radius of the ball of $\sigma/(\sqrt{Lq}\norm{A}_{\square})$.
For the proposed method, in the limit as $k \to \infty$, we have roughly
\begin{equation}
\label{eq:cvr3}
    \mathbb{E}[\|x_{k} -\hat x\|^2_2] \leq \frac{2}{ \|\mathbf{A}\|_{\square}^{2}} \cdot g(k),
\end{equation}
with $g(k) = \frac{\sigma^2}{2q^2k}$. So that the proposed method is better as soon as
\begin{align*}
    \frac{\sigma^2}{qL \|\mathbf{A}\|_{\square}^{2}} &\geq  \frac{2}{ \|\mathbf{A}\|_{\square}^{2}} \cdot \frac{\sigma^2}{2q^2k},
\end{align*}
i.e. for $k\geq L/q$.

We remark that the choice of $N_{0}$ and $N_{1}$ is crucial for a good estimate of the hyperparameters. The hyperparameter $\gamma$ is related to the asymptotic exponential decay of the Bregman distance. This parameter is quite difficult to estimate since the RSK method converges and reduces the Bregman distance to the solution much faster in the first iterations than in the asymptotic regime. On the other hand, the asymptotic regime is cluttered with effects from the noise. We observed that a fairly large $N_{0}$ leads to good results in most experiments. For the $\beta_{0}$ used an even larger number, i.e. we used $N_{1}$ large but only so large that the pre-asymptotic convergence has already happened.

\section{Conclusion}
\label{sec:conclusion}
In this work, we have proposed the block Bregman Kaczmarz using adaptive stepsize for solving finite dimensional linear inverse problems under the assumption of independent noise. In this setting, one never sees the noise-free right-hand side but always a new noisy version with noise being independent of every previous information and identically distributed. We showed that with a well-chosen stepsize~\eqref{eq:etak-betak} we are able to converge to the exact solution if we have access to a sufficient number of equations with independent noise. We gave a general error bound in terms of total noise variance $\sigma^2$, the block number $M$ and the constant $\theta(\hat x)$. The stepsize depends on two parameters: the signal-to-noise ratio $ D_{f}^{x_0^{*}}(x_{0},\hat{x})/\sigma^{2}$ and the rate parameter $\gamma$. However, we showed in Section~\ref{sec:heuristic} how to estimate these unknown parameters after having chosen $N_0, N_1$ and demonstrated in various scenarios in experiments the effectiveness of these estimations. We have shown in the experiments part that using a block size of 2 can already speed up the convergence a lot but in general taking a bigger block size gives us faster convergence results. It would be interesting to investigate how the actual rate $\mathcal{O}(k^{-1/2})$ to recover the exact solution can be accelerated.

The heuristic estimates of the hyperparameters $\gamma$ and $\beta_{0}$ depend on the choices of $N_{0}$ and $N_{1}$, respectively. While these values play a big role, one can usually get good values for these values by inspecting the decay of the residual of a standard Bregman-Kaczmarz run, keeping in mind which regimes should be used for the estimation of the hyperparameters (steady linear decay for $\gamma$ and stagnation for $\beta_{0}$).

In this paper, we assumed that the blocks are fixed for the full run of the algorithm. We think that the results can be extended to a framework where the blocks of rows are newly sampled in each iteration (cf.~\cite{necoara2019faster,tondji2022faster}) and expect that similar results than in this paper can be derived.

\bibliographystyle{plain}  
\bibliography{ref}   
\end{document}